\newtheorem{theorem}{Theorem}
\newtheorem{lemma}{Lemma}
\newcommand{\squeezeup}{\vspace{-2.5mm}}
\begin{document}
\title{Structure Learning and Statistical Estimation in Distribution Networks - Part I}
\author{\authorblockN{Deepjyoti~Deka*, Scott~Backhaus\dag, and Michael~Chertkov\ddag\\}
\authorblockA{*Corresponding Author. Electrical \& Computer Engineering, University of Texas at Austin\\
\dag MPA Division, Los Alamos National Lab\\ \ddag Theory Division and the Center for Nonlinear Systems, Los Alamos National Lab}
\thanks{D. Deka is with the Department of Electrical and Computer Engineering, The University of Texas at Austin, Austin, TX 78712. Email: deepjyotideka@utexas.edu}
\thanks{S. Backhaus is with the MPA Division of LANL, Los Alamos, NM 87544. Email: *backhaus@lanl.gov}
\thanks{M. Chertkov is with the Theory Division and the Center for Nonlinear Systems of LANL, Los Alamos, NM 87544. Email: *chertkov@lanl.gov}}
\maketitle

\begin{abstract}
Traditionally power distribution networks are either not observable or only partially observable. This complicates development and implementation of new smart grid technologies, such as those related to demand response, outage detection and management, and improved load-monitoring. In this two part paper, inspired by proliferation of metering technology, we discuss estimation problems in structurally loopy but operationally radial distribution grids from measurements, e.g. voltage data, which are either already available or can be made available with a relatively minor investment. In Part I, the objective is to learn the operational layout of the grid. Part II of this paper presents algorithms that estimate load statistics or line parameters in addition to learning the grid structure. Further, Part II discusses the problem of structure estimation for systems with incomplete measurement sets. Our newly suggested algorithms apply to a wide range of realistic scenarios. The algorithms are also computationally efficient -- polynomial in time -- which is proven theoretically and illustrated computationally on a number of test cases. The technique developed can be applied to detect line failures in real time as well as to understand the scope of possible adversarial attacks on the grid.
\end{abstract}

\begin{IEEEkeywords}
Power Distribution Networks, Power Flows, Struture/graph Learning, Voltage measurements, Transmission Lines.
\end{IEEEkeywords}
\section{Introduction}
\label{sec:intro}
The power grid is composed of a network of transmission and distribution lines that enable the transfer of electrical power from generators to loads. The design, operation and control of these networks is typically hierarchical with a major division occurring between the transmission network of high voltage lines connecting sub-stations and power plants, and the distribution network of medium and low voltage lines that connect the transmission sub-stations to the end-users. Here, we focus on distribution networks.

The design of distribution networks may appear to be loopy or meshed, however for practical engineering concerns, the vast majority of distribution grids are operated as "radial" networks, i.e. as a set of non-overlapping trees. Switches in the network are used to achieve one radial configuration out of many possibilities. Each tree in the network has a substation at the root and customers positioned at the other nodes. Switching from one tree-like operational configuration to another is typically caused by system upsets, e.g. faults and outages, and may occur few times a day or even an hour.

The radial configuration distinguishes distribution networks from transmission networks that generally have multiple loops energized all the time to guarantee continuous delivery of power to every node, even in case of occasional line faults and outages. Radial configurations and one-way flow of power have led to much less monitoring, observability, and state estimation in distribution as compared to meshed transmission networks \cite{hoffman2006practical}. The recent proliferation of smart grid technology, including smart meters that measure electricity consumption at the node level, is creating a new opportunities to extract information important to grid operators and planners. Such efforts are also getting additional attention in view of mounting concerns over data security and protection of user privacy \cite{liu2012cyber}.

In this paper (Part I), we seek to \textbf{\textit{develop low-complexity algorithms to learn the current operational structure in `radial' distribution networks using only nodal measurements}}. Nodal measurements may include voltage magnitudes, voltage phase (potentially), and power injections and are typically available at smart meters, pole-mount or pad-mount transformers, and distribution phasor measurement units (PMU). Accurate structural estimation impacts many important applications including failure identification \cite{sharon2012topology}, outage management, and recovery following major and minor disruptions (e.g. hurricanes to individual lightning strikes), grid reconfiguration \cite{baran1989network} for power flow optimization and generation scheduling\cite{hoffman2006practical,lopes2011integration,baran1989network,turitsyn2011options}, and quantifying the need for additional meter placement. From an adversarial viewpoint, our work can be viewed as low-intrusion learning by a rogue agent interested in estimating the grid structure for a data attack \cite{kim2013topology,deka2014attacking}. In the subsequent part (Part II), we will look at developing algorithms that are able to estimate the statistics of power consumption at the grid nodes or estimate the parameters of operational lines in addition to determining the grid's radial structure. Further, we will analyze learning the operational grid structure with missing data, where observations from a subset of nodes are not available.

\subsection{Related Work}
Our work falls in the broad category of `graph learning' problems that have been approached from different directions. For general graphs and graphical models \cite{wainwright2008graphical}, maximum-likelihood structure estimation has been researched in several papers by utilizing prior information such as sparsity of the parameter space \cite{ravikumar2010high,anandkumar2011high}, size of the graph neighborhood \cite{netrapalli2010greedy}, etc. Techniques employed include both traditional convex optimization \cite{ravikumar2010high,ravikumar2011high} as well as greedy learning \cite{netrapalli2010greedy}. For power grids, structure estimation techniques discussed in the literature can be classified based on the type of measurements available as well as assumptions made regarding grid structure and user behavior. In \cite{kekatos2013grid}, a maximum likelihood estimator (MLE) with regularizers for low-rank and sparsity is used to recover the grid structure using locational marginal prices (LMPs). In \cite{he2011dependency}, a model using bus phase angles as a Markov random field for the DC power flow builds a dependency graph based approach to detect faults in grids. In work specific to radial distribution grids, \cite{bolognani2013identification} provides a structure identification algorithm that uses signs within the inverse covariance matrix (or concentration matrix) of voltage measurements to generate a minimum spanning tree. In \cite{sharon2012topology}, topology identification with limited measurements in a distribution grid with Gaussian loads is used to design a machine learning (ML) estimate with approximate schemes. Our work uses ordering of second moments, not a ML approach, to reconstruct a radial grid sequentially from the leaves to the root, making it distinct from previous work. Our algorithm design is based on a linear coupled approximation for lossless AC power flow that is idealized but practical \cite{89BWa,89BWb} for analyzing distribution grids where the line and voltage characteristics limit the accuracy of traditional approximations.
Unlike related work, our topology learning algorithm is agnostic to the load profile distributions or variability in line impedances and requires only a less restrictive assumption on the correlation of load profiles.

The rest of the paper is organized as follows. We start the next section with a description of the distribution grid and summary of the learning problems discussed in the manuscript. Section \ref{sec:pf+corr} and Appendix \ref{app:PF} describe the linear coupled (LC) power flow model and its special case, the DC-resistive power flow model. Statistical trends in observed nodal measurements are discussed in Section \ref{sec:trends}. Next, we use the derived results to design algorithms for learning the distribution grid structure using the power flow models in Section \ref{sec:learn}. Simulation results elucidating the performance of our algorithms on test distribution grids are presented in Section \ref{sec:experiments}. Finally, Section \ref{sec:conclusion} concludes and suggests future directions that will be explored in a subsequent work.

\section{Technical Preliminaries}
\label{tech_intro.tex}
The structure of a radial distribution network has important features that motivates our algorithm development. We discuss the radial structure in detail here and introduce the notation used in this paper. We then formulate the learning problem tackled in this paper in terms of its input data and deliverables and discuss the underlying motivation.

\subsection{Structure of Radial Distribution Network}
\label{sec:structure}
\begin{figure}[!bt]
\centering
\includegraphics[width=0.40\textwidth, height = .30\textwidth]{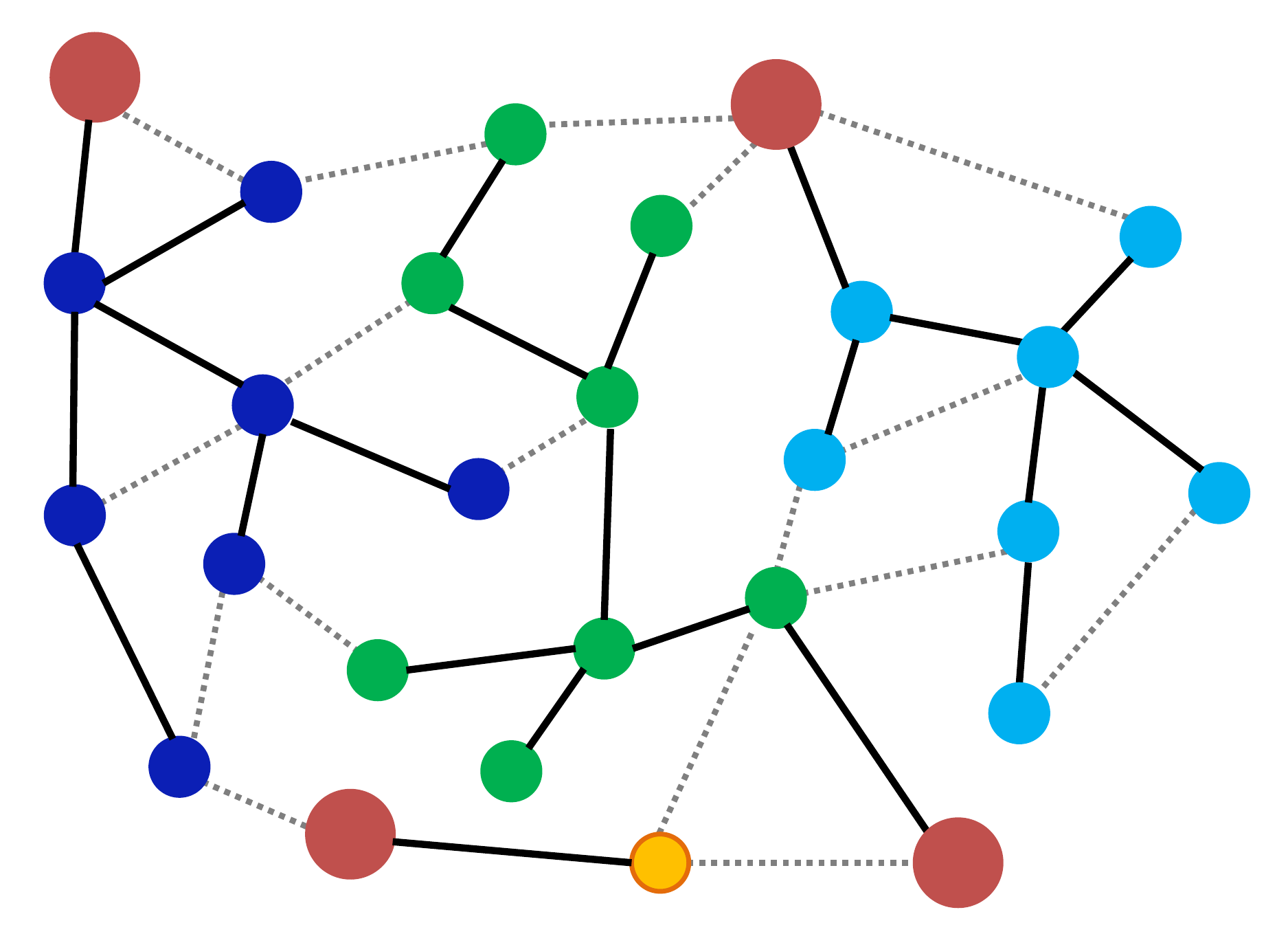}
\caption{Schematic layout of an example of a distribution grid fed from $4$ substations, with substations represented by large red nodes. The operational grid has a base-$4$ spanning forest configuration that is formed by solid lines (black). Dotted grey lines represent open switches. Each tree in the forest has one substation at the root. The load nodes within each tree are marked with the same color.
\label{fig:city}}
\end{figure}

We consider a meshed distribution network which is operated as a union of non-intersecting `radial' trees, i.e. a spanning forest, by configuring switches as shown in Fig.~\ref{fig:city}. There are exponentially many (in the number of switches) possible configurations of spanning forests. The grid-graph with all the switches closed is denoted ${\cal G}=({\cal V},{\cal E})$, where ${\cal V}$ is the set of nodes of the graph and ${\cal E}$ is the set of undirected edges of the graph. We denote nodes with single Roman letter subscripts $a$ and undirected edges with pairs of Roman letter subscripts $(ab)$. The operational grid is a forest denoted by ${\cal F}$ which spans all the nodes in ${\cal V}$. Specifically, ${\cal F}$ is a special subgraph of ${\cal G}$ (${\cal F}\subset {\cal G}$) such that
\begin{itemize}
\item ${\cal F}$ is a union of $K$ non-overlapping trees covering all the nodes of the graph
 \item Each tree contains exactly one of the $K$ bases (substations), ${\cal F}=\cup_{k=1,\cdots,K}{\cal T}_k$.
\end{itemize}
The distribution system ${\cal F}$ is a \textit{\textbf{`base-constrained spanning forest'}} with operational edges ${\cal E}^{\cal F}$ where ${\cal E}^{\cal F}\subset {\cal E}$. Table~\ref{table1} provides other relevant notations (nomenclature) used through out this manuscript to denote various nodal and edge features of the grid $\cal G$ and the operational forest $\cal F$.


\begin{table}[ht]
\caption{Notation Table}
\begin{center}
\begin{tabular}{|l|l|}
\hline
${\cal F}\in{\cal G}$ & \begin{tabular}{|l} a particular forest configuration\\
of the physical distribution network ${\cal G}$ \end{tabular}\\
$\cal V$ & vertex set of ${\cal G}$\\
$N$ & $\#$ of nodes other than sub-stations in $\cal V$ \\
$K$ & number of sub-stations in the network \\
$\cal E$ & edge set of ${\cal G}$\\
${\cal E}^{\cal F}$ & set of edges operational within $\cal F$ \\
${\cal T}_k\in{\cal F}$ & \begin{tabular}{|l} tree within ${\cal F}$
containing\\ the $k^{th}$ sub-station\end{tabular} \\
$M_k$ & reduced incidence matrix of the tree ${\cal T}_k$ \\
${\cal E}^{{\cal T}_k}$ & set of edges in ${\cal T}_k$ \\
${\cal V}^{{\cal T}_k}$ & set of nodes in ${\cal T}_k$\\
path between $a, b\in {\cal V}^{{\cal T}_k}$ &
\begin{tabular}{|l}
subset of edges from ${\cal E}^{{\cal T}_k}$
s.t. each\\ node with edge in the subset, except\\ $a$ and $b$,
contributes exactly two edges\end{tabular} \\
${\cal E}_a^{{\cal T}_k}$ & path from $a$ to slack bus in ${{\cal T}_k}$ \\
$b$ is a descendant of $a$ & $a$ contributes ${\cal E}_b^{{\cal T}_k}$ \\
$b$ is the parent of $a$ & $(ab)\in {\cal E}_a^{{\cal T}_k}$ and $b$ contributes ${\cal E}_a^{{\cal T}_k}$ \\
$\theta_a, v_a$ & voltage phase and magnitude resp. at bus $a$ \\
$\theta, v$ & \begin{tabular}{|l}
Vector of non-substation voltage\\ phases and magnitudes resp. \end{tabular}\\
$\varepsilon_a$ & $=1-v_a$, voltage deviation at node $a$\\
$\varepsilon$ & $=1-v$, voltage deviation \\
$p_a, q_a$ & \begin{tabular}{|l} resp. active and reactive power\\ injection/consumption ($+/-$) at bus $a$\end{tabular} \\
$p, q$ & \begin{tabular}{|l}
Vector of non-substation \\ power injections/consumptions \end{tabular}\\
$\beta_{ab},g_{ab},r_{ab},x_{ab}$ & \begin{tabular}{|l} susceptance, conductance, resistance,\\ reactance resp. of edge $(ab)$ \end{tabular}\\
$\beta,g,r,x$ & \begin{tabular}{|l} diagonal matrix of line susceptances,\\ conductances, resistances, reactances resp.\end{tabular} \\
$\Sigma_y$ & matrix of second moments for variable $y$\\
$\Omega_y$ & matrix of covariances for variable $y$\\
$\mu_y$ & vector of means for variable $y$ \\
$H_y$ & \begin{tabular}{|l}
reduced weighted Laplacian matrix\\
with edge weights in $y$
\end{tabular}\\
$D^{{\cal T}_k}_a$ & Descendants (including itself) of $a$ in ${\cal T}_k$\\
$\cal M$ & set of unobserved nodes\\
\hline
\end{tabular}
\end{center}
\label{table1}
\end{table}

\subsection{Problem Formulation and Contribution}
\label{subsec:motivation}
We consider large distribution grids where the utility (observer) is unsure of the grid configuration because of insufficient or inaccurate switching data, perhaps caused by a recent system upset. Alternatively, we could take the point of view of a third party observer, who may be an aggregator or adversary, trying to extract the current forest configuration from available nodal measurements. We assume the current spanning forest configuration is kept intact sufficiently long for load profiles at grid nodes to attain a steady distribution (longer than the fluctuations but shorter than changes in the mean load).

We assume that the observer has access to nodal measurements, but not edge measurements -- an assumption consistent with the recent expansion of smart grid monitoring devices. Smart meters generally provide nodal voltages and power injections at fine spatial resolution, i.e. at the individual customer level, but they do not provide any edge flow data. Additional instrumentation is emerging for pole-mount or pad-mount transformers \cite{poles}, however, these new devices still only provide nodal voltages and aggregated customer power injections.\footnote{We use the term `power injection', `power consumption' and `load' interchangeably to denote the power profile at each interior (non-substation) node of the distribution system.} Some edge flow data is available to utilities, however, this is generally at a few select locations in the distribution grid, e.g. at the substation/root node, voltage regulators, reclosers, or other major utility equipment. These select locations may also have nodal and edge data from another emerging technology, i.e. distribution grid PMUs \cite{phadke1993synchronized}. However, we continue to restrict our input data to nodal values, which is consistent with the new, ubiquitous sensing provided by smart meters.

The nodal devices provide the observer with temporal samples of the nodal voltage magnitudes. The observer seeks to use these samples to learn the current configuration of switches that determine the `base-constrained spanning forest'. To supplement the voltage magnitude samples, the observer has historical information about statistics of the nodal consumption.

\section{Power Flow Models and Statistical Correlations}
\label{sec:pf+corr}
Our approach to the structure learning problem relies on linearized PF models on radial spanning forests that enable efficient reconstruction of the grid structure via a second-moment analysis. The most general of the two, termed the Linear Coupling (LC) model, ignores losses of active and reactive powers and consistently assumes small voltage magnitude and phase drops between connected nodes. For tree-like distribution grids, the LC-PF model becomes equivalent to the LinDistFlow PF model in \cite{89BWa,89BWb}. The second model considered in the paper, coined the DC-resistive model, corresponds to the special resistance dominating case of the LC-PF model. These PF models are described in more detail in Appendix \ref{app:PF}.

\subsection{Linear Coupled Power Flow (LC-PF) model}
\label{subsec:LC}
As noted in Eqs.~(\ref{PF_LPV_p},\ref{PF_LPV_q}) in Appendix \ref{app:PF}, the LC-PF model is derived from the general AC power flow model by assuming small voltage magnitude deviations and phase differences between neighboring buses in the grid. It is convenient to restate the linear equations in LC-PF model in matrix form as:
\begin{eqnarray}
&& p= H_g\varepsilon+H_{\beta}\theta, ~~ q= H_{\beta}\varepsilon-H_g\theta \label{LP-p}
\end{eqnarray}
where $p,q,\varepsilon$ and $\theta$ are defined in Table \ref{table1}. $H_g$ and $H_{\beta}$ are the weighted graph Laplacian matrices associated with forest ${\cal F}$ such that
\begin{eqnarray}
{\huge H}_g(a,b)&=\begin{cases}\sum_{c:(a,c) \in {\cal E}^{\cal F}}g_{ac} & \quad\text{if~} b = a\\
-g_{ab} & \quad\text{if~} (ab) \in {\cal E}^{\cal F}\\
 0 & \quad\text{otherwise}\end{cases} \label{Weighted Laplace}
\end{eqnarray}
$H_{\beta}$ has a similar structure with $g$-weights replaced by $\beta$-weights. The weighted graph Laplacians can be stated in terms of the directed incidence matrix $M$ as
\begin{eqnarray}
H_g = M^Tg^{\cal F}M, \quad H_{\beta} = M^T{\beta}^{\cal F}M
\end{eqnarray}
Here, $g^{\cal F}$ and ${\beta}^{\cal F}$ are diagonal matrices representing, respectively, line conductances and susceptances for edges within ${\cal F}$. $M$ is the edge to node directed incidence matrix of $\cal F$. See Fig. \ref{fig:picinc} for an example. Every row $m_{ab}$ in $M$ is equal to $\pm(e_a^T -e_b^T)$ and represents the directed edge $(ab)$, where the direction of an edge is chosen arbitrarily. $e_a \in \mathbb{R}^{N+k}$ is the standard basis vector associated with the vertex $a$, with $1$ at the $a^{th}$ position and zero everywhere else. We can combine Eqs.~(\ref{LP-p}) and express the complex power flows as:
\begin{eqnarray}
p+\hat{i}q = M^T(g^{\cal F}+ \hat{i}{\beta}^{\cal F})M(\varepsilon - \hat{i}\theta)\label{AC_pf3}
\end{eqnarray}

Both $H_\beta$ and $H_g$ are weighted graph Laplacians and are degenerate --- showing $K$ zero-eigenvalues associated with the freedom in fixing phase and voltage deviation (from nominal) at any node within each tree of the forest. It is natural to fix phases and voltages at the sub-stations making these `slack buses' $a_k$ for trees ${\cal T}_k$ of the (operational) forest, ${\cal F}$ such that $\theta_{a_k}=\varepsilon_{a_k}=0, $ for any $k$, $1 \leq k \leq K$. Formally, elimination of the set of $K$ sub-stations corresponds to elimination of $K$ components from all the vectors contributing Eqs.~(\ref{LP-p}), and reduction of $K$ rows and $K$ columns from the weighted Laplacian matrices. All the eigenvalues of the resulting reduced graph Laplacian matrices are thus strictly positive.

Without loss of generality, we will use the same notation for the original and reduced dimension variables $\theta, \varepsilon, p$ and $q$ and also refer to Eqs.~(\ref{LP-p},\ref{AC_pf3}) as applied to the reduced vectors of dimension $N\times 1$.
We will also keep notations, $H_{\beta}$ and $H_g$ for the reduced graph Laplacian matrices, and $M$ for the reduced incidence matrix respectively. The reduced $M$ has a block diagonal structure:
$M=\mbox{diag}(M_1,M_2,\cdots, M_K)$,
where, $M_k$ is the invertible reduced incidence matrix of tree ${\cal T}_k$ in $\cal F$. Thus, $M$ and correspondingly $H_\beta$ and $H_g$ are full rank, invertible and block-diagonal matrices. Inverting the linear non-degenerate Eqs.~(\ref{LP-p}) we arrive at
\begin{eqnarray}
\theta=& M^{-1}x^{\cal F}{M^{-1}}^Tp -M^{-1}r^{\cal F}{M^{-1}}^Tq = H^{-1}_{1/x}p - H^{-1}_{1/r}q\label{AC_pf4}\\
\varepsilon=& M^{-1}r^{\cal F}{M^{-1}}^Tp +M^{-1}x^{\cal F}{M^{-1}}^Tq = H^{-1}_{1/r}p + H^{-1}_{1/x}q \label{AC_pf5}
\end{eqnarray}
where $H_{1/r} \doteq M^T{r^{\cal F}}^{-1}M$ and $H_{1/x} \doteq M^T{x^{\cal F}}^{-1}M$. $r^{\cal F}$ and $x^{\cal F}$ are diagonal matrices representing, respectively, line resistances and reactances within the forest ${\cal F}$. Their relation to $g^{\cal F}$ and ${\beta}^{\cal F}$ are expressed in Eqs.~(\ref{g}).
\subsection{Relations between second moments}
\label{subsec:moments}
The real and reactive nodal power injections $p$ and $q$ in Eqs.~(\ref{AC_pf4},\ref{AC_pf5}) fluctuate because of exogenous processes, and their second moments are related by:
\begin{align}
 \mathbb{E}[\theta\theta^T] =& H^{-1}_{1/x}\mathbb{E}[pp^T]H^{-1}_{1/x} + H^{-1}_{1/r}\mathbb{E}[qq^T]H^{-1}_{1/r}\nonumber\\
&~- H^{-1}_{1/x}\mathbb{E}[pq^T]H^{-1}_{1/r}- H^{-1}_{1/r}\mathbb{E}[qp^T]H^{-1}_{1/x}\nonumber\\
\Rightarrow~ \Sigma_{\theta} =& H^{-1}_{1/x}\Sigma_pH^{-1}_{1/x} + H^{-1}_{1/r}\Sigma_qH^{-1}_{1/r} \nonumber\\
&~- H^{-1}_{1/x}\Sigma_{pq}H^{-1}_{1/r}- \left[H^{-1}_{1/x}\Sigma_{pq}H^{-1}_{1/r}\right]^T \label{thetacov}\\
\text{Similarly,}~~\Sigma_{\varepsilon} =& H^{-1}_{1/r}\Sigma_pH^{-1}_{1/r} + H^{-1}_{1/x}\Sigma_qH^{-1}_{1/x}\nonumber\\
 &~+ H^{-1}_{1/r}\Sigma_{pq}H^{-1}_{1/x}+\left[H^{-1}_{1/r}\Sigma_{pq}H^{-1}_{1/x}\right]^T\label{voltcov}\\
 \Sigma_{\theta\varepsilon} =& H^{-1}_{1/x}\Sigma_{p}H^{-1}_{1/r} - H^{-1}_{1/r}\Sigma_qH^{-1}_{1/x}\nonumber\\
 &~+ H^{-1}_{1/x}\Sigma_{pq}H^{-1}_{1/x} - H^{-1}_{1/r}\Sigma_{qp}H^{-1}_{1/r}\label{volangcov}
\end{align}
These formulas are the basis for reconstruction/learning analysis in the rest of the paper.

\subsection{DC-resistive model} 
\label{subsec:DC-res}
The DC-resistive PF model (see Appendix \ref{app:PF}) is an extremal case of the LC-PF model realized when line reactance can be ignored in comparison with resistance ($x/r\to 0$). The relation between the statistics of active powers and voltage second-order moments deviations reduces to
\begin{eqnarray}
\mathbb{E}[pp^T] = H_g\mathbb{E}[\varepsilon\varepsilon^T]H_g \Rightarrow~ \Sigma_{\varepsilon} =H^{-1}_g\Sigma_pH^{-1}_g \label{cov}
\end{eqnarray}
\label{sec:trends}
\begin{figure}[ht]
\squeezeup
\centering
\subfigure[]{\includegraphics[width=0.20\textwidth,height = .16\textwidth]{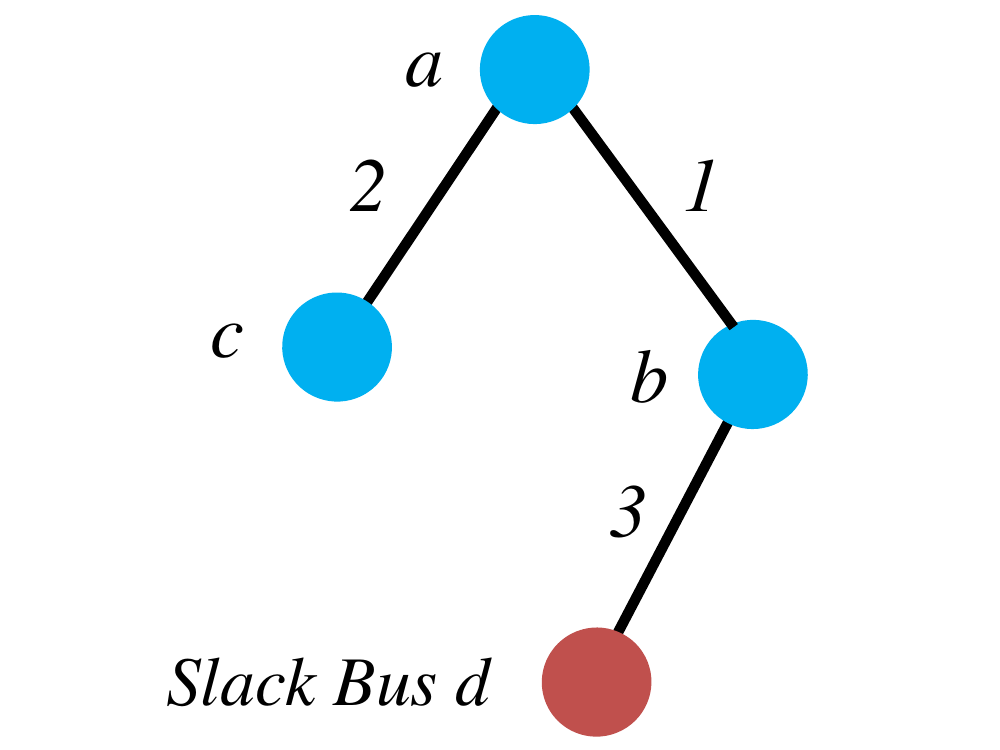}\label{fig:picinc3}}
\subfigure[]{\includegraphics[width=0.18\textwidth,height=0.10\textwidth]{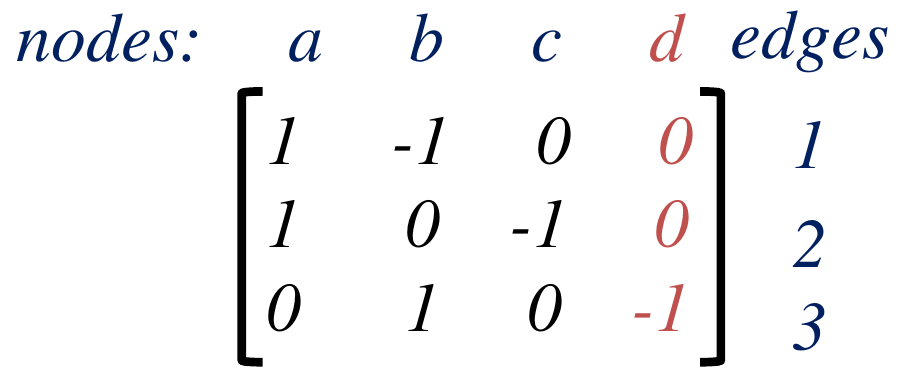}\label{fig:picinc_1}}
\squeezeup
\subfigure[]{\includegraphics[width=0.18\textwidth,height=0.10\textwidth]{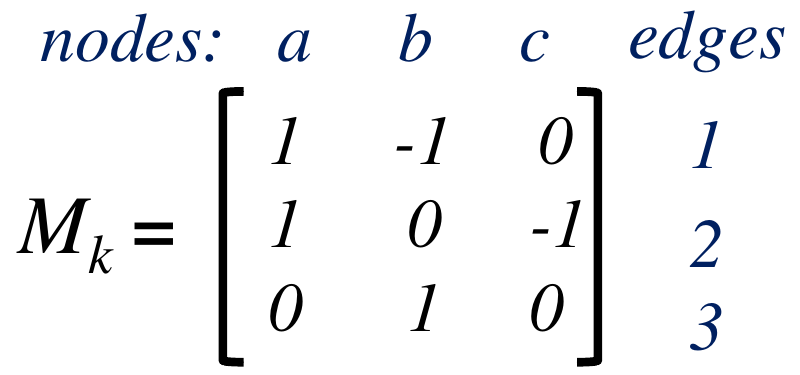}\label{fig:picinc1}}
\subfigure[]{\includegraphics[width=0.18\textwidth,height=0.10\textwidth]{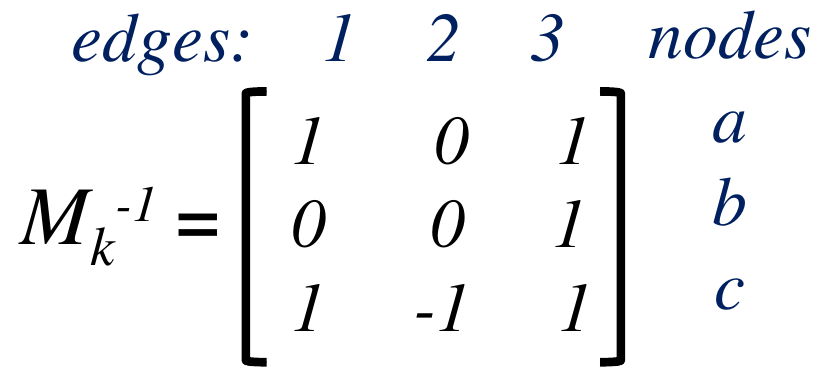}\label{fig:picinc2}}
\caption{Illustration of the reduced incidence matrix construction on the example of a tree with $4$ nodes. (a) Tree graph with four nodes ($a,b,c,d$) and three edges ($1,2,3$). (b) Complete directed incidence matrix. The ``directed" freedom in choosing edge orientations is fixed as follows, $1=(ab)$, $2=(ac)$, $3=(bd)$. (c) The column corresponding to the slack bus (bus $d$) is removed in the reduced incidence matrix $M_k$. (d) Inverse of the reduced incidence matrix $M_k$.
\label{fig:picinc}}
\vspace{-3mm}
\end{figure}

\section{Trends in Second Moments over Tree Networks}
We now derive key results related to the second moments in voltage magnitudes that arise from the properties of the forest $\cal F$. We denote the unique path from node $a$ to the slack bus in tree ${\cal T}_k$ by ${\cal E}_a^{{\cal T}_k}$. From \cite{68Resh}, the inverse of the reduced incidence matrix of a tree has the following special structure:

\begin{align} 
\squeezeup
{\huge M}_k^{-1}(a,r)=\begin{cases}1 & \text{if edge $r\in {\cal E}_a^{{\cal T}_k}$ is directed}\\
&\text{along path
from $a$ to slack bus},\\
-1 & \text{if edge $r\in {\cal E}_a^{{\cal T}_k}$ is directed}\\
&\text{against path from $a$ to slack bus}, \\
0 & \text{if edge~} r \not\in {\cal E}_a^{{\cal T}_k} \end{cases} \label{treeinv}
\squeezeup
\end{align}
Here, the direction of edge $r = (cd)$ is specified by its representative row $m_{cd}$ in the directed incidence matrix. For example, if $m_{cd} = e_c^T - e_d^T$, the edge is directed from node $c$ to node $d$, whereas for $m_{cd} = e_d^T - e_c^T$, the direction is from node $d$ to node $c$.

An immediate corollary of (\ref{treeinv}) is that $M^{-1}(a,r) = 0$ if edge $r$ and node $a$ lie on separate trees within the forest $\cal F$, a fact consistent with the block diagonal structure of $M$. Using (\ref{treeinv}) in $H_g^{-1} = M^{-1}{g^{\cal F}}^{-1}{M^{-1}}^T$, we derive for forest $\cal F$
\begin{align}
 H_g^{-1}(a,b)&= 0 \text{~if $a,b$ are on different trees ${\cal T}_{k}$ and}\label{Hinv0}\\
 H_g^{-1}(a,b)&= \sum_{r \in {\cal E}^{{\cal T}_k}} M^{-1}(a,r){g^{{\cal F}}}^{-1}(r,r)M^{-1}(b,r) \text{~if~} a,b \in {\cal T}_k\nonumber\\
&= \sum_{(cd) \in {\cal E}_a^{{\cal T}_k}\bigcap {\cal E}_b^{{\cal T}_k}} \frac{1}{g_{cd}} \text{~if~} a,b \in {\cal T}_k\label{Hinv}
\end{align}

Thus, $H_g^{-1}(a,b)$ is equal to \textit{the sum of the inverse conductances of lines that are common to the paths from both nodes to the slack bus.} If no such line exists, the corresponding entry in $H_g^{-1}$ is $0$. See Fig.~\ref{fig:picHinv1} for illustration. Similar results hold for other measurement matrices like
\begin{eqnarray}
 H^{-1}_{1/r}(a,b) &&=\begin{cases} \sum_{(cd) \in {\cal E}_a^{{\cal T}_k}\bigcap {\cal E}_b^{{\cal T}_k}} r_{cd} \text{~~if nodes~} a,b \in {\cal T}_k\\
 0 ~~ \text{otherwise,} \end{cases}\label{Hrxinv}
\end{eqnarray}

Let $D^{{\cal T}_k}_a$ denote the set of descendants of node $a$ within the tree ${\cal T}_k$. We call $b$ a descendent of $a$, if $a$ lies on the (unique) path from $b$ to the slack bus of ${\cal T}_k$, also including $a$ itself in the set of its descendants. We call $b$ the parent of $a$ within ${\cal T}_k$ (there can only be one) if $(ab)\in{\cal T}_k$ and $a$ is an immediate descendant of $b$ as illustrated in Fig. \ref{fig:descendant}.
\begin{figure}[!bt]
\centering
\subfigure[]{\includegraphics[width=0.20\textwidth]{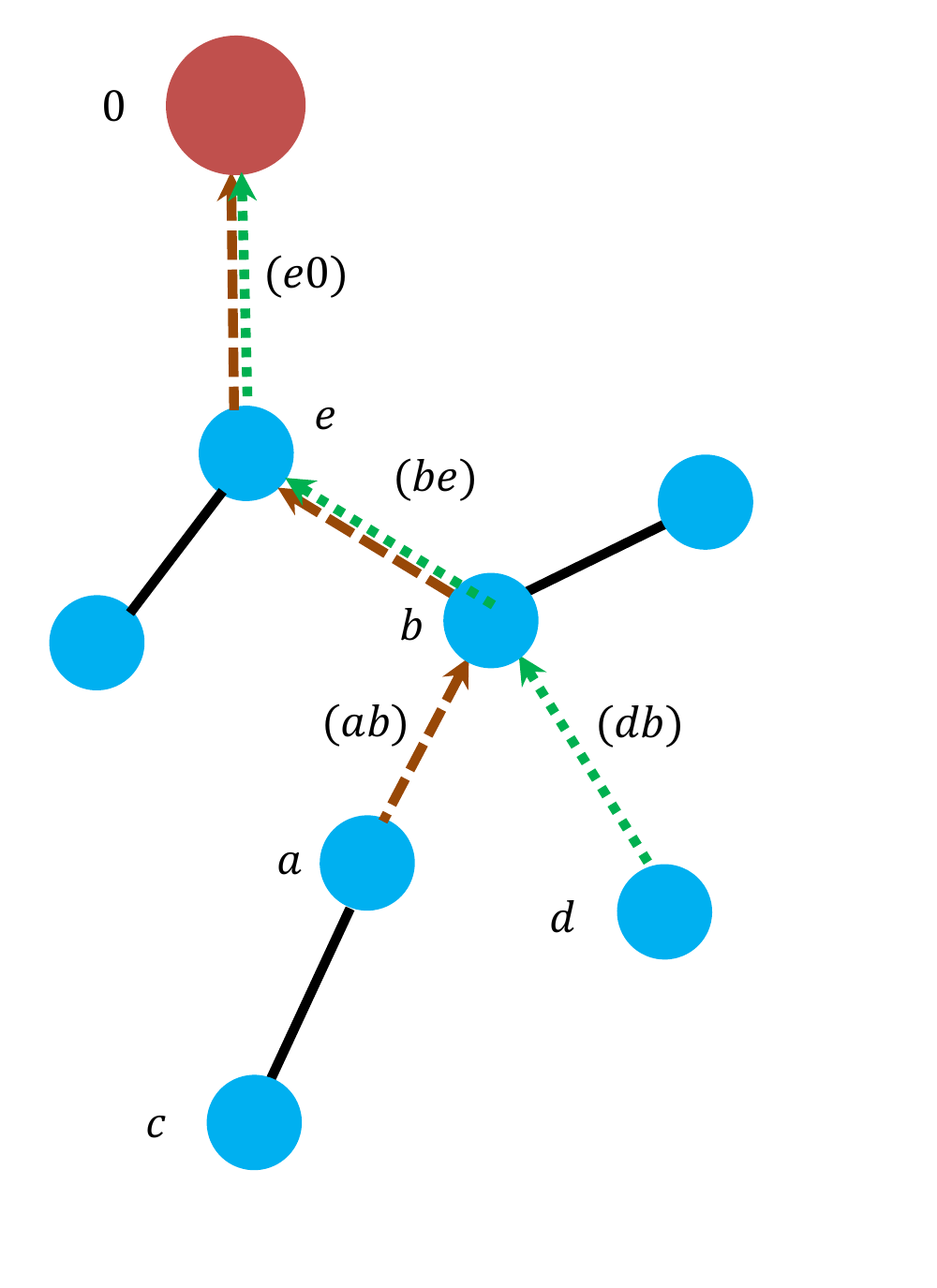}\label{fig:picHinv1}}\hspace{.6cm}
\subfigure[]{\includegraphics[width=0.20\textwidth]{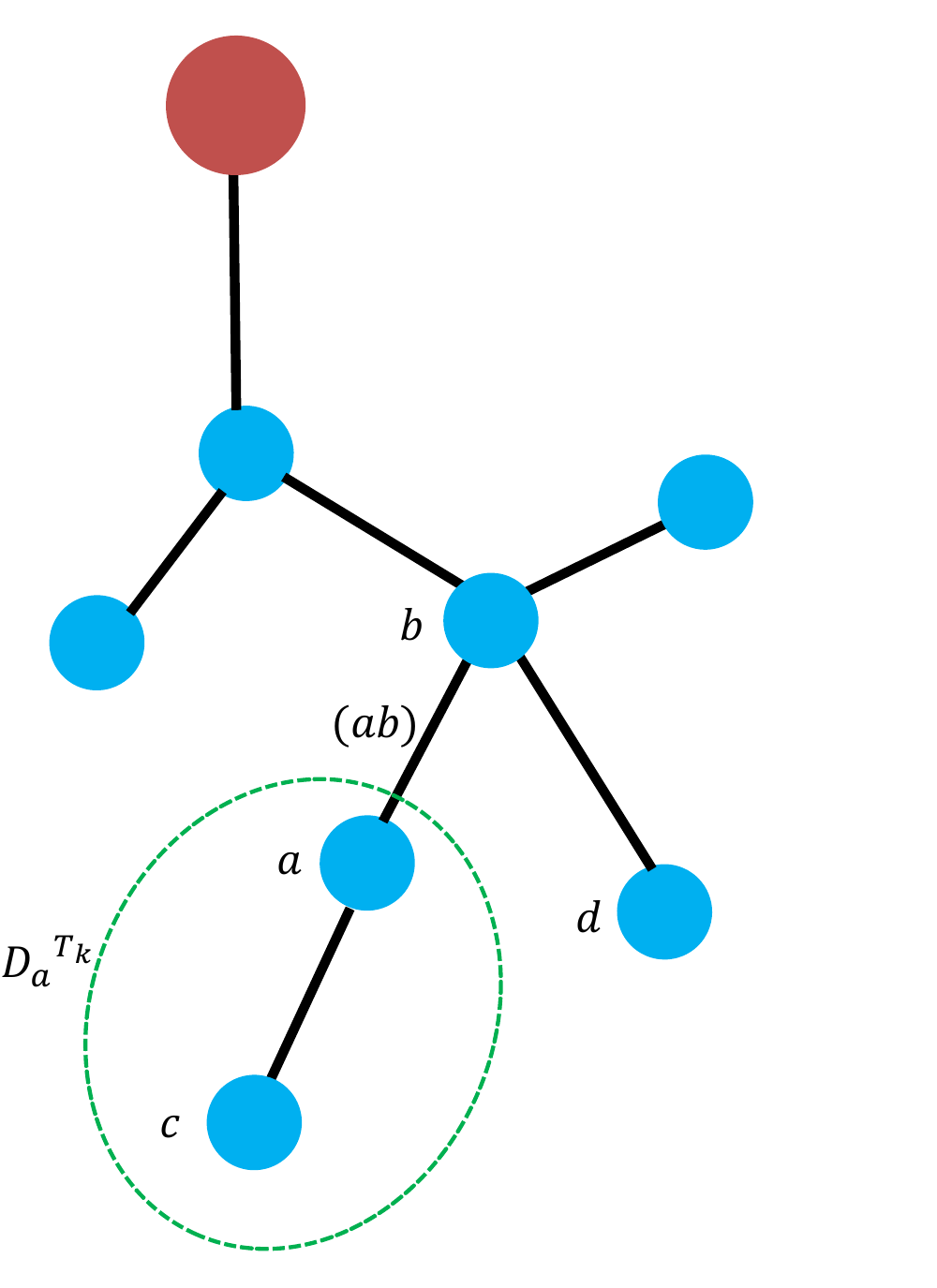}\label{fig:descendant}}
\caption{Schematic layout of a distribution grid tree ${\cal T}_k$. The sub-station node represented by large red node is the slack bus. (a) Dotted lines represent the paths from nodes $a$ and $d$ to the slack bus. The common edges on those paths are edges $(be)$ and $(e0)$. Thus, $H_g^{-1}(a,d) = 1/g_{be}+ 1/g_{e0}$. (b) the set of descendants of $a$, denoted by $D_a^{{\cal T}_k}$. Here, nodes $a$ and $c$ are descendants of both nodes $a$ and $b$, while node $d$ is not a descendant of $a$ but only of node $b$.
\label{fig:picHinv}}
\vspace{-3mm}
\end{figure}

The following statement holds.
\begin{lemma}\label{Lemmadiff}
For two nodes, $a$ and its parent $b$, in tree ${\cal T}_k$
\begin{align} 
{\huge H}_g^{-1}(a,c)-{\huge H}_g^{-1}(b,c)&&=\begin{cases}\frac{1}{g_{ab}} & \quad\text{if node $c \in D^{{\cal T}_k}_a$}\\
0 & \quad\text{otherwise,} \end{cases} \label{Hdiff}
\end{align}
\end{lemma}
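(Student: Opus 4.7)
The plan is to derive the identity directly from the path-sum formula in Eq.~(\ref{Hinv}), exploiting the structural fact that the path from $a$ to the slack bus differs from the path from $b$ to the slack bus only by the edge $(ab)$. First, assuming $c \in {\cal T}_k$ is in the same tree (the other case being trivial via Eq.~(\ref{Hinv0})), I would invoke Eq.~(\ref{Hinv}) to write the difference as $\sum_{(ef)\in {\cal E}_a^{{\cal T}_k}\cap {\cal E}_c^{{\cal T}_k}} 1/g_{ef} \;-\; \sum_{(ef)\in {\cal E}_b^{{\cal T}_k}\cap {\cal E}_c^{{\cal T}_k}} 1/g_{ef}$. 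Because $b$ is the parent of $a$, the unique path from $a$ to the slack bus is exactly the path from $b$ to the slack bus augmented by the single edge $(ab)$; that is, ${\cal E}_a^{{\cal T}_k} = {\cal E}_b^{{\cal T}_k} \cup \{(ab)\}$ as a disjoint union. The difference then collapses to $1/g_{ab}$ when $(ab)\in {\cal E}_c^{{\cal T}_k}$ and to $0$ otherwise.

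The second step is to translate the combinatorial condition $(ab)\in {\cal E}_c^{{\cal T}_k}$ into the descendant statement. By uniqueness of paths in a tree, the path from $c$ to the slack bus contains the edge $(ab)$ iff it passes through $a$ on its way to $b$, which is exactly the definition of $a$ lying on the path from $c$ to the slack bus, i.e., $c \in D_a^{{\cal T}_k}$. The case $c = a$ is consistent, since $(ab)\in {\cal E}_a^{{\cal T}_k}$ and $a \in D_a^{{\cal T}_k}$ by convention. Conversely, if $c \in D_a^{{\cal T}_k}$, then $a$ lies on the path from $c$ to the slack bus, and because $(ab)$ is the unique edge at $a$ directed toward the root, it must appear on that path.

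A brief sanity check handles the degenerate cases: if $c$ lies on a different tree from $a$ and $b$, both entries of $H_g^{-1}$ vanish by Eq.~(\ref{Hinv0}) and simultaneously $c \notin D_a^{{\cal T}_k}$, so the claim holds vacuously; if $b$ itself is the slack bus, then ${\cal E}_b^{{\cal T}_k} = \emptyset$ and the argument still goes through unchanged. The main obstacle is essentially notational rather than analytic: one must be careful to interpret ${\cal E}_a^{{\cal T}_k}$ as an unordered set of undirected edges, so that the edge-direction freedom encoded in $M$ cancels correctly between the two sums, and to keep track of the convention $a \in D_a^{{\cal T}_k}$ that makes the $c = a$ case consistent. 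Beyond these bookkeeping issues, the proof reduces to subtracting two sums whose symmetric difference is a single edge, so no real technical difficulty is anticipated.
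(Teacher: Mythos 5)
Your proposal is correct and follows essentially the same route as the paper's proof: both reduce the difference to the path-sum formula of Eq.~(\ref{Hinv}), use ${\cal E}_a^{{\cal T}_k}={\cal E}_b^{{\cal T}_k}\cup\{(ab)\}$ to show the two path intersections with ${\cal E}_c^{{\cal T}_k}$ differ by at most the single edge $(ab)$, and identify the condition $(ab)\in{\cal E}_c^{{\cal T}_k}$ with $c\in D_a^{{\cal T}_k}$. Your additional handling of the degenerate cases is consistent with the paper's brief appeal to Eq.~(\ref{Hinv0}).
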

\begin{proof}
For any node $c$ which belongs to a tree not containing nodes $a$ and $b$, $H_g^{-1}(a,c)-H_g^{-1}(b,c) = 0$ according to (\ref{Hinv0}). Now, focus on nodes contained, together with $a$ and $b$, within ${\cal T}_k$. Since $b$ is $a$'s parent, ${\cal E}_a^{{\cal T}_k} = {\cal E}_b^{{\cal T}_k}\bigcup \{(ab)\}$ and we derive (also validating on the illustrative example in Fig.~\ref{fig:descendant}) that for any node $c$ in tree ${\cal T}_k$,
\begin{align*}
{\cal E}_a^{{\cal T}_k}\bigcap {\cal E}_c^{{\cal T}_k} &= {\cal E}_b^{{\cal T}_k}\bigcap {\cal E}_c^{{\cal T}_k} \quad &&\text{if node $c \not\in D^{{\cal T}_k}_a$}\\
{\cal E}_a^{{\cal T}_k}\bigcap {\cal E}_c^{{\cal T}_k} &= [{\cal E}_b^{{\cal T}_k}\bigcap {\cal E}_c^{{\cal T}_k}] \bigcup \{(ab)\} \quad &&\text{if node $c \in D^{{\cal T}_k}_a$}
\end{align*}
resulting in Eq.~(\ref{Hdiff}).
\end{proof}

We now prove our main results regarding trends in second moments of deviations in voltage magnitudes ($\varepsilon$) along any tree in the network. The results are conditioned on the following assumption regarding correlations in power injections at the non-substation buses.

\textbf{Assumption $1$}: For any two buses $a$ and $b$ drawing power from the same distribution sub-station, $\Sigma_p(a,b)>0, \Sigma_q(a,b)>0, \Sigma_{pq}(a,b)>0$.

Note that this assumption holds, in particular, if the overall node balance is such that each non-substation node $a$ always consumes strictly more than it produces in active and reactive powers, i.e. $p_a <0, q_a <0$. This assumption is certainly true in any distribution grid with small and/or moderate penetration of renewables \cite{pedersen2008load}. However, the assumption is also reasonable for a system with significant penetration of generation which is still dominated in average by the consumption.

\begin{theorem} \label{Theorem1}
If node $a \neq b$ is a descendant of node $b$ within tree ${\cal T}_k$, then for the DC-resistive model, $\Sigma_{\varepsilon}(a,a) > \Sigma_{\varepsilon}(b,b)$.
\end{theorem}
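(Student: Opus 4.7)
The plan is to combine the explicit DC-resistive identity $\Sigma_\varepsilon = H_g^{-1}\Sigma_p H_g^{-1}$ from (\ref{cov}) with the structural difference formula in Lemma \ref{Lemmadiff}, and then invoke Assumption~1 to upgrade a manifestly non-negative expression to a strictly positive one.

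First I would expand the diagonal entries of $\Sigma_\varepsilon$ as quadratic forms in the rows of $H_g^{-1}$:
\begin{align*}
\Sigma_\varepsilon(a,a)-\Sigma_\varepsilon(b,b) = \sum_{c,d}\bigl[H_g^{-1}(a,c)H_g^{-1}(a,d) - H_g^{-1}(b,c)H_g^{-1}(b,d)\bigr]\Sigma_p(c,d).
\end{align*}
Since $a$ is a descendant of $b$ with $b$ its parent, Lemma \ref{Lemmadiff} gives the rank-one update $H_g^{-1}(a,c) = H_g^{-1}(b,c) + \frac{1}{g_{ab}}\mathbf{1}[c\in D_a^{{\cal T}_k}]$ (and likewise in the $d$-variable). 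Substituting into the bracket, expanding the product, cancelling the $H_g^{-1}(b,\cdot)H_g^{-1}(b,\cdot)$ piece, and using symmetry of $\Sigma_p$ to merge the two cross terms, I expect the expression to collapse to
\begin{align*}
\Sigma_\varepsilon(a,a)-\Sigma_\varepsilon(b,b) = \frac{2}{g_{ab}}\sum_{c}\sum_{d\in D_a^{{\cal T}_k}} H_g^{-1}(b,c)\,\Sigma_p(c,d) + \frac{1}{g_{ab}^2}\sum_{c,\,d\in D_a^{{\cal T}_k}}\Sigma_p(c,d).
\end{align*}

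Strict positivity of the right-hand side is then the last step. Equation (\ref{Hinv0}) kills every $c$ lying in a different tree from ${\cal T}_k$, while (\ref{Hinv}) shows $H_g^{-1}(b,c)\geq 0$ for $c\in{\cal T}_k$, with $H_g^{-1}(b,b) = \sum_{(ij)\in{\cal E}_b^{{\cal T}_k}} 1/g_{ij} > 0$ because $b$ is not the substation. Since $D_a^{{\cal T}_k}\subset{\cal T}_k$, Assumption~1 yields $\Sigma_p(c,d)>0$ for every pair $c,d$ appearing in either sum. Hence every summand in both terms is non-negative, and the summand at $c=b$ in the first sum (or at $c=d=a$ in the second) is strictly positive, giving $\Sigma_\varepsilon(a,a) > \Sigma_\varepsilon(b,b)$.

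I expect the main obstacle to be not algebraic but organizational: keeping the book-keeping clean when expanding the symmetric bilinear form, recognizing that the cross terms are equal by symmetry of $\Sigma_p$, and tracking that the quadratic-in-indicator term indexes over pairs in $D_a^{{\cal T}_k}\times D_a^{{\cal T}_k}$ only. Once that decomposition is in hand, Assumption~1 is precisely the hypothesis needed to deliver strict positivity, and the non-negativity of the entries of $H_g^{-1}$ noted after (\ref{Hinv}) finishes the job.
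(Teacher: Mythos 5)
Your core computation is correct and is, in substance, the paper's own argument reorganized: the paper splits $\Sigma_{\varepsilon}(a,a)-\Sigma_{\varepsilon}(b,b)$ into $[\Sigma_{\varepsilon}(a,a)-\Sigma_{\varepsilon}(a,b)]+[\Sigma_{\varepsilon}(a,b)-\Sigma_{\varepsilon}(b,b)]$ and applies Lemma~\ref{Lemmadiff} to each bracket, whereas you substitute the rank-one update $H_g^{-1}(a,\cdot)=H_g^{-1}(b,\cdot)+\tfrac{1}{g_{ab}}\mathbf{1}[\cdot\in D_a^{{\cal T}_k}]$ directly into the quadratic form and expand once; the two cross terms you merge by symmetry of $\Sigma_p$ are exactly the paper's two brackets, and your extra term $\tfrac{1}{g_{ab}^2}\sum_{c,d\in D_a^{{\cal T}_k}}\Sigma_p(c,d)$ is absorbed there into the first bracket. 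Your positivity argument (using Eq.~(\ref{Hinv0}) to discard nodes on other trees, the non-negativity of $H_g^{-1}$ entries from Eq.~(\ref{Hinv}), and Assumption~1 on pairs within ${\cal T}_k$) is sound and, if anything, slightly more explicit than the paper's.

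The one genuine gap is that you silently replace ``$a$ is a descendant of $b$'' by ``$b$ is the parent of $a$.'' Lemma~\ref{Lemmadiff}, and hence your rank-one update, holds \emph{only} for a node and its parent; for a non-adjacent ancestor the difference $H_g^{-1}(a,c)-H_g^{-1}(b,c)$ is not a single indicator times $1/g_{ab}$, so your expansion does not apply as written. The statement for a general descendant needs one more (easy) step, which is how the paper closes: take the path $a,c_1,\dots,c_r,b$ in which each node is the parent of its predecessor, apply the parent-child inequality to each consecutive pair, and chain $\Sigma_{\varepsilon}(a,a)>\Sigma_{\varepsilon}(c_1,c_1)>\cdots>\Sigma_{\varepsilon}(c_r,c_r)>\Sigma_{\varepsilon}(b,b)$. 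With that sentence added your proof is complete.
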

\begin{proof}
We first show that for any node $a$ and its parent $b$, $\Sigma_{\varepsilon}(a,a) > \Sigma_{\varepsilon}(b,b)$. Consider $\Sigma_{\varepsilon}(a,a) - \Sigma_{\varepsilon}(a,b)$. From Eq.~(\ref{cov}), we derive
\begin{align}
\Sigma_{\varepsilon}(a,a) - \Sigma_{\varepsilon}(a,b) =& \sum_{c,d}H_g^{-1}(a,c)\Sigma_p(c,d)\nonumber\\
&~~\left(H_g^{-1}(a,d)-H_g^{-1}(b,d)\right) \label{uselater}\\
(\text{using Lemma \ref{Lemmadiff}})=& \sum_{c,d}H_g^{-1}(a,c)\Sigma_p(c,d)\frac{1}{g_{ab}}\textbf{1}(d \in D^{{\cal T}_k}_a) \nonumber\\
\Rightarrow \Sigma_{\varepsilon}(a,a) - \Sigma_{\varepsilon}(a,b) >& 0 ~(\text{using Assumption $1$})\label{equ1}\\
\text{Also,}\Sigma_{\varepsilon}(a,b) - \Sigma_{\varepsilon}(b,b) &= \smashoperator[lr]{\sum_{c \in D^{{\cal T}_k}_a,d}}H_g^{-1}(b,d)\Sigma_p(c,d)\frac{1}{g_{ab}}  > 0\label{equ2}
\end{align}

Combining Eqs.~(\ref{equ1}) and (\ref{equ2}) we derive $\Sigma_{\varepsilon}(a,a) > \Sigma_{\varepsilon}(a,b) > \Sigma_{\varepsilon}(b,b)$. Since node $a$ is a descendant of node $b$, there is a path $a, c_1,...c_r,b$, such that each node in the path is a parent of its predecessor. Then, we derive $\Sigma_{\varepsilon}(a,a) > \Sigma_{\varepsilon}(c_1,c_1) >...> \Sigma_{\varepsilon}(c_r,c_r) > \Sigma_{\varepsilon}(b,b)$.
\end{proof}

The following theorem is the LC-PF version of Theorem \ref{Theorem1}.
\begin{theorem} \label{Theorem1_LC}
If node $a \neq b$ is a descendant of node $b$ on tree ${\cal T}_k$ in forest $\cal F$, then $\Sigma_{\varepsilon}(a,a) > \Sigma_{\varepsilon}(b,b)$ in the LC-PF model.
\end{theorem}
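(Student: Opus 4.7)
The plan is to mirror the argument of Theorem~\ref{Theorem1}, but with the four-term LC-PF identity (\ref{voltcov}) in place of the single-term DC-resistive identity (\ref{cov}). The starting point is to observe that Lemma~\ref{Lemmadiff} never actually used the specific conductance weights — only the path characterization of $H_g^{-1}$ in (\ref{Hinv}). The identical path formula (\ref{Hrxinv}) holds for $H^{-1}_{1/r}$, and the analogous one holds for $H^{-1}_{1/x}$, so for a node $a$ with parent $b$ we immediately get
\begin{align*}
H^{-1}_{1/r}(a,c) - H^{-1}_{1/r}(b,c) &= r_{ab}\,\mathbf{1}\bigl(c \in D^{{\cal T}_k}_a\bigr),\\
H^{-1}_{1/x}(a,c) - H^{-1}_{1/x}(b,c) &= x_{ab}\,\mathbf{1}\bigl(c \in D^{{\cal T}_k}_a\bigr).
\end{align*}
These are the workhorse identities for the rest of the argument.

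Next, I would reduce to the parent-child case exactly as in Theorem~\ref{Theorem1}: if $a$ is a descendant of $b$ there is a parent-chain $a, c_1, \dots, c_r, b$, so it suffices to prove $\Sigma_\varepsilon(a,a) > \Sigma_\varepsilon(b,b)$ when $b$ is the parent of $a$, and then iterate. I would further split this via the intermediate quantity $\Sigma_\varepsilon(a,b)$ into the two strict inequalities $\Sigma_\varepsilon(a,a) > \Sigma_\varepsilon(a,b) > \Sigma_\varepsilon(b,b)$.

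The main computation is then to decompose each of these two differences into four summands, one per term of (\ref{voltcov}), and to apply the extended difference identity to whichever factor carries the $a$-vs-$b$ index. Each summand takes the shape, e.g. for the first term of $\Sigma_\varepsilon(a,a) - \Sigma_\varepsilon(a,b)$,
\begin{align*}
\sum_{c,d} H^{-1}_{1/r}(a,c)\,\Sigma_p(c,d)\,r_{ab}\,\mathbf{1}\bigl(d \in D^{{\cal T}_k}_a\bigr),
\end{align*}
i.e.\ a triple sum of three non-negative factors: an entry of $H^{-1}_{1/r}$ or $H^{-1}_{1/x}$ (non-negative by the path formulas), an entry of $\Sigma_p$, $\Sigma_q$, $\Sigma_{pq}$, or $\Sigma_{qp}=\Sigma_{pq}^T$ (non-negative by Assumption~$1$), and a line resistance or reactance multiplied by an indicator on $D^{{\cal T}_k}_a$. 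Strict positivity of the full difference is secured by the $H^{-1}_{1/r}\Sigma_pH^{-1}_{1/r}$ piece alone: taking $c=d=a$ gives $H^{-1}_{1/r}(a,a)\,\mathrm{Var}(p_a)\,r_{ab} > 0$, and the other three summands only contribute non-negatively, so they cannot cancel it. The bookkeeping for $\Sigma_\varepsilon(a,b) - \Sigma_\varepsilon(b,b)$ is the mirror image, with the difference identity applied to the left factor.

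There is no real obstacle here; the argument is a faithful but more verbose repetition of Theorem~\ref{Theorem1}. The one place that requires a moment of care is verifying that the two cross terms $H^{-1}_{1/r}\Sigma_{pq}H^{-1}_{1/x}$ and its transpose cannot flip sign — this reduces to observing that $\Sigma_{qp}=\Sigma_{pq}^T$ also has positive entries under Assumption~$1$, and that $H^{-1}_{1/r}$ and $H^{-1}_{1/x}$ are entrywise non-negative. Once that is in place, the chaining over the parent path $a \to c_1 \to \cdots \to c_r \to b$ closes the proof.
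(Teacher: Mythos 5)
Your proposal is correct and follows essentially the same route as the paper: decompose $\Sigma_{\varepsilon}$ in Eq.~(\ref{voltcov}) into its four constituent terms, apply the Lemma~\ref{Lemmadiff}-type path identities for $H^{-1}_{1/r}$ and $H^{-1}_{1/x}$ together with Assumption~$1$ to show each term separately satisfies the strict inequality via the parent-child chaining of Theorem~\ref{Theorem1}, and sum. The paper's proof is just a terser version of the same argument.
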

\begin{proof}
Consider Eq.~(\ref{voltcov}). Notice that the right hand side has four constituent terms ($H^{-1}_{1/r}\Sigma_pH^{-1}_{1/r}$, $H^{-1}_{1/x}\Sigma_qH^{-1}_{1/x}$, $H^{-1}_{1/r}\Sigma_{pq}H^{-1}_{1/x}$ and $H^{-1}_{1/x}\Sigma_{qp}H^{-1}_{1/r}$). We denote each of these terms by $\Sigma^j_{\varepsilon}$ where $j
\in \{1,2,3,4\}$. For each individual term, applying Assumption $1$ and the analysis in Theorem \ref{Theorem1}, we find that $\Sigma^j_{\varepsilon}(a,a) > \Sigma^j_{\varepsilon}(b,b)$ if node $a$ is a descendant of node $b$, other than itself. Thus, the statement also holds for the sum.
\end{proof}

We now focus on evaluating the term $\mathbb{E}[(\varepsilon_a-\varepsilon_b)^2]$, which is the expected value of the squared difference between two node voltage deviations ($\varepsilon$). For any two nodes $a$ and $b$, the DC-resistive model yields:
\begin{align}
\mathbb{E}[(\varepsilon_a-\varepsilon_b)^2] &= \Sigma_{\varepsilon}(a,a) - \Sigma_{\varepsilon}(a,b) - \Sigma_{\varepsilon}(b,a) + \Sigma_{\varepsilon}(b,b)\nonumber\\
(\text{using (\ref{uselater})}) &= \sum_{c,d}H_g^{-1}(a,c)\Sigma_p(c,d)\left(H_g^{-1}(a,d)- H_g^{-1}(b,d)\right)\nonumber\\
 &~- \sum_{c,d}H_g^{-1}(b,c)\Sigma_p(c,d)\left(H_g^{-1}(a,d)- H_g^{-1}(b,d)\right)\nonumber\\
&= \sum_{c,d}\left(H_g^{-1}(a,c)- H_g^{-1}(b,c)\right)\Sigma_p(c,d)\nonumber\\
&\quad\quad~\left(H_g^{-1}(a,d)- H_g^{-1}(b,d)\right)\label{diffsq}
\end{align}

Our next Lemma follows directly by applying Lemma \ref{Lemmadiff} to Eq.~(\ref{diffsq}).
\begin{lemma}\label{Lemmadiffsq}
For two nodes, $a$ and its parent $b$ belonging to tree ${\cal T}_k$, in the DC-resistive model, we derive $\mathbb{E}[(\varepsilon_a-\varepsilon_b)^2] = \sum_{c,d \in D^{{\cal T}_k}_a} \frac{1}{g_{ab}^2}\Sigma_p(c,d)$
\end{lemma}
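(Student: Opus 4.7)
The plan is to exploit the fact that in the DC-resistive model the voltage deviation vector is given by the linear identity $\varepsilon = H_g^{-1} p$, so that the scalar difference admits a one-line expansion $\varepsilon_a - \varepsilon_b = \sum_c [H_g^{-1}(a,c) - H_g^{-1}(b,c)]\, p_c$. Since $b$ is the parent of $a$ in ${\cal T}_k$, Lemma \ref{Lemmadiff} applies verbatim: the bracket equals $1/g_{ab}$ exactly when $c \in D^{{\cal T}_k}_a$ and vanishes otherwise. This collapses the expansion to $\varepsilon_a - \varepsilon_b = (1/g_{ab}) \sum_{c \in D^{{\cal T}_k}_a} p_c$, which is nothing but Ohm's law across the single edge $(ab)$ combined with Kirchhoff's current law aggregating the injections of the subtree rooted at $a$.

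Squaring this identity and taking expectations, while recalling from Table \ref{table1} that $\Sigma_p(c,d) = \mathbb{E}[p_c p_d]$ (second moments, not covariances), yields
\begin{align*}
\mathbb{E}[(\varepsilon_a - \varepsilon_b)^2]
&= \frac{1}{g_{ab}^2}\, \mathbb{E}\Big[\Big(\sum_{c \in D^{{\cal T}_k}_a} p_c\Big)^{2}\Big] \\
&= \frac{1}{g_{ab}^2} \sum_{c,d \in D^{{\cal T}_k}_a} \mathbb{E}[p_c p_d] \\
&= \sum_{c,d \in D^{{\cal T}_k}_a} \frac{\Sigma_p(c,d)}{g_{ab}^2},
\end{align*}
which is the claimed identity. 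Equivalently one may start from the already derived Eq.~(\ref{diffsq}) and apply Lemma \ref{Lemmadiff} to each of the two bracketed factors separately; the same collapse of the index set to $D^{{\cal T}_k}_a \times D^{{\cal T}_k}_a$ and the same factor $1/g_{ab}^2$ emerge.

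Because the lemma is an immediate corollary of Lemma \ref{Lemmadiff} once Eq.~(\ref{diffsq}) is in hand, there is no real obstacle. The only bookkeeping point worth flagging is that the paper's convention includes $a$ itself in $D^{{\cal T}_k}_a$, which ensures the diagonal contribution $\Sigma_p(a,a)$ is correctly counted; I would verify this explicitly on the schematic of Fig.~\ref{fig:descendant} before writing up the final proof.
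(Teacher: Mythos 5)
Your proposal is correct and follows essentially the same route as the paper: the paper derives Eq.~(\ref{diffsq}) as a quadratic form in the differences $H_g^{-1}(a,\cdot)-H_g^{-1}(b,\cdot)$ and then invokes Lemma~\ref{Lemmadiff} to collapse both index sets to $D^{{\cal T}_k}_a$, which is exactly the ``equivalent'' variant you mention at the end. Applying Lemma~\ref{Lemmadiff} before squaring rather than after is only a cosmetic reordering of the same argument.
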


For the LC-PF model, we evaluate the expression $\mathbb{E}[(\varepsilon_a-\varepsilon_b)^2]$ as well. In this case, for two nodes, $a$ and its parent $b$, that lie on tree ${\cal T}_k$, we arrive at
\squeezeup
\begin{eqnarray}
\mathbb{E}[(\varepsilon_a-\varepsilon_b)^2] &&= \Sigma_{\varepsilon}(a,a) - \Sigma_{\varepsilon}(a,b) - \Sigma_{\varepsilon}(b,a) + \Sigma_{\varepsilon}(b,b)\nonumber
\end{eqnarray}
where $\Sigma_{\varepsilon}$ is given by Eq.~(\ref{voltcov}). Let $\Sigma^1_{\varepsilon}$ and $\Sigma^2_{\varepsilon}$ represent the symmetric terms $H^{-1}_{1/r}\Sigma_pH^{-1}_{1/r}$ and $H^{-1}_{1/x}\Sigma_qH^{-1}_{1/x}$ respectively in $\Sigma_{\varepsilon}$. Extending the result of Lemma \ref{Lemmadiffsq},
we derive
\begin{align}
\Sigma^1_{\varepsilon}(a,a) -\Sigma^1_{\varepsilon}(a,b) - \Sigma^1_{\varepsilon}(b,a) + \Sigma^1_{\varepsilon}(b,b) = \smashoperator[lr]{\sum_{c,d \in D^{{\cal T}_k}_a}} r_{ab}^2\Sigma_p(c,d)\label{firstterm}\\
\Sigma^2_{\varepsilon}(a,a) -\Sigma^2_{\varepsilon}(a,b) - \Sigma^2_{\varepsilon}(b,a) + \Sigma^2_{\varepsilon}(b,b) = \smashoperator[r]{\sum_{c,d \in D^{{\cal T}_k}_a}} x_{ab}^2\Sigma_q(c,d)\label{secondterm}
\end{align}

Similarly, let $\Sigma^3_{\varepsilon}= H^{-1}_{1/r}\Sigma_{pq}H^{-1}_{1/x}$ and $\Sigma^4_{\varepsilon}= {\Sigma^3}^T_{\varepsilon}= \left[H^{-1}_{1/r}\Sigma_{pq}H^{-1}_{1/x}\right]^T$, the non-symmetric terms in $\Sigma_{\varepsilon}$. Using Lemma \ref{Lemmadiff} with Eq.~(\ref{diffsq}), we get
\begin{align}
&\Sigma^3_{\varepsilon}(a,a) -\Sigma^3_{\varepsilon}(a,b) - \Sigma^3_{\varepsilon}(b,a) + \Sigma^3_{\varepsilon}(b,b)
=\smashoperator[r]{\sum_{c,d\in D^{{\cal T}_k}_a}}r_{ab}\Sigma_{pq}(c,d)x_{ab}\nonumber\\
&=\Sigma^4_{\varepsilon}(a,a) -\Sigma^4_{\varepsilon}(a,b) - \Sigma^4_{\varepsilon}(b,a) + \Sigma^4_{\varepsilon}(b,b)
\label{thirdterm}
\end{align}
Combining Eqs.~(\ref{firstterm},\ref{secondterm},\ref{thirdterm}) we arrive at the following Lemma.
\begin{lemma}\label{LemmadiffsqLC}
In the LC-PF model, $\mathbb{E}[(\varepsilon_a-\varepsilon_b)^2] = \sum_{c,d \in D^{{\cal T}_k}_a} r_{ab}^2\Sigma_p(c,d)+x_{ab}^2\Sigma_q(c,d)+2r_{ab}x_{ab}\Sigma_{pq}(c,d)$, holds
for a node $a$ and its parent $b$ belonging to the (operational) tree ${\cal T}_k$.
\end{lemma}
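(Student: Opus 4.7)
The plan is to mimic the derivation of Lemma \ref{Lemmadiffsq}, but applied term by term to the four-summand expansion of $\Sigma_\varepsilon$ given in Eq.~(\ref{voltcov}). First I would write
$\mathbb{E}[(\varepsilon_a-\varepsilon_b)^2] = \Sigma_\varepsilon(a,a) - \Sigma_\varepsilon(a,b) - \Sigma_\varepsilon(b,a) + \Sigma_\varepsilon(b,b)$
and split the right-hand side according to the four terms $\Sigma_\varepsilon^j$, $j\in\{1,2,3,4\}$, from Eq.~(\ref{voltcov}). Because the quadratic form $f\mapsto f(a,a)-f(a,b)-f(b,a)+f(b,b)$ is linear in $f$, each $\Sigma_\varepsilon^j$ can be handled separately and the results summed at the end.

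For the two symmetric terms $\Sigma_\varepsilon^1 = H^{-1}_{1/r}\Sigma_p H^{-1}_{1/r}$ and $\Sigma_\varepsilon^2 = H^{-1}_{1/x}\Sigma_q H^{-1}_{1/x}$, I observe that the chain of identities leading to Eq.~(\ref{diffsq}) uses only the structure $H^{-1} X H^{-1}$ for a symmetric $X$, so it carries over verbatim with $H_g^{-1}$ replaced by $H^{-1}_{1/r}$ (resp.\ $H^{-1}_{1/x}$). The key ingredient I need is the analog of Lemma \ref{Lemmadiff} for these inverse matrices. From Eq.~(\ref{Hrxinv}), entries of $H^{-1}_{1/r}$ are sums of line resistances along shared path segments to the slack bus, so the path-comparison argument used in Lemma \ref{Lemmadiff} gives $H^{-1}_{1/r}(a,c)-H^{-1}_{1/r}(b,c)= r_{ab}$ when $c\in D^{{\cal T}_k}_a$ and $0$ otherwise, and similarly $H^{-1}_{1/x}(a,c)-H^{-1}_{1/x}(b,c)=x_{ab}\,\mathbf{1}(c\in D^{{\cal T}_k}_a)$. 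Plugging these differences into the symmetric analog of Eq.~(\ref{diffsq}) yields Eqs.~(\ref{firstterm}) and~(\ref{secondterm}).

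For the cross terms $\Sigma_\varepsilon^3 = H^{-1}_{1/r}\Sigma_{pq}H^{-1}_{1/x}$ and $\Sigma_\varepsilon^4 = (\Sigma_\varepsilon^3)^T$, the same linearity trick expands the four-point combination as
$\sum_{c,d}\bigl(H^{-1}_{1/r}(a,c)-H^{-1}_{1/r}(b,c)\bigr)\Sigma_{pq}(c,d)\bigl(H^{-1}_{1/x}(a,d)-H^{-1}_{1/x}(b,d)\bigr)$,
and by the two path-difference identities above this collapses to $r_{ab}\,x_{ab}\,\Sigma_{pq}(c,d)$ whenever both $c,d\in D^{{\cal T}_k}_a$ and vanishes otherwise. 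The transposed term $\Sigma_\varepsilon^4$ contributes an identical sum (the scalar $r_{ab}x_{ab}$ is unchanged under transposition and the constraint $c,d\in D^{{\cal T}_k}_a$ is symmetric in $c,d$), producing Eq.~(\ref{thirdterm}) and the factor of $2$ in the final formula. Summing the contributions from $\Sigma_\varepsilon^1,\Sigma_\varepsilon^2,\Sigma_\varepsilon^3,\Sigma_\varepsilon^4$ gives exactly the claimed expression.

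The main (admittedly minor) obstacle I anticipate is bookkeeping for the asymmetric cross terms: one has to verify that $\Sigma_\varepsilon^3$ and $\Sigma_\varepsilon^4$ combine to give a factor of $2$ rather than cancel or double-count, and that after transposition the summation index set remains $D^{{\cal T}_k}_a\times D^{{\cal T}_k}_a$. Otherwise the proof is a routine aggregation of per-term applications of Lemma \ref{Lemmadiff} extended to the $H^{-1}_{1/r},H^{-1}_{1/x}$ inverses.
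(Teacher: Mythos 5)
Your proposal is correct and follows essentially the same route as the paper: decompose $\Sigma_\varepsilon$ from Eq.~(\ref{voltcov}) into the four terms, apply the path-difference identity (the $H^{-1}_{1/r}$, $H^{-1}_{1/x}$ analogs of Lemma~\ref{Lemmadiff}) to each via the expansion in Eq.~(\ref{diffsq}), and sum, with the two cross terms supplying the factor of $2$. Your bookkeeping for the transposed term is exactly what the paper does in Eq.~(\ref{thirdterm}).
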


\section{Learning Structure of Base-Constrained Spanning Forest}
\label{sec:learn}

Here, we propose Algorithm $1$ to learn the structure of the distribution network using properties of voltage deviations for the LC-PF model. The polynomial time algorithm, based on the Theorems proved in the previous section, requires positivity of the correlation between nodal power injections (Assumption $1$). The Algorithm is also agnostic to the probability distribution of active and reactive power injections. To reconstruct the Base-Constrained Spanning Forest (${\cal F}=\cup_{k=1,\cdots,K}{\cal T}_k$), Algorithm $1$ takes as input `$m$' measurements of nodal voltage deviations. These measurements are used to create the empirical voltage deviation second moment matrix $\Sigma_{\varepsilon}$. The voltage deviations $\varepsilon^j_a$ at the substation nodes are assumed to be zero which implies the elements of the row $\Sigma_{\varepsilon}(a,:)$=0 for each of the $K$ substation nodes. The observer has prior information (or estimates using power injection measurements) for the true second moment matrix of power injections $\Sigma_p$ for non-substation nodes.

\begin{algorithm}
\caption{Base Constrained Spanning Forest Learning: LC-PF Model}
\textbf{Input:} True $\Sigma_p, \Sigma_q$ and $\Sigma_{pq}$, $m$ voltage deviation observations $\varepsilon^j, 1\leq j \leq m$, all line resistances $r$ and reactances $x$\\
\begin{algorithmic}[1]
\State Compute $\Sigma_{\varepsilon}(a,a) = \sum_{j = 1}^m\varepsilon^j_a\varepsilon^j_a/m$ for all nodes $a$.
\State Undiscovered Set $U \gets \{1,2,...,N+K\}$, Leaf Set $L \gets \phi$, Descendant Sets $D_a \gets \{a\} \forall$ nodes $a$.
\While {($U \neq \phi)$}
\State $b^* \gets \max_{b \in U} \Sigma_{\varepsilon}(b,b)$ \label{step2_1}
 \ForAll{$a \in L$}
\If {$\sum_{j=1}^m(\varepsilon^j_a-\varepsilon^j_{b^*})^2/m = \sum_{c,d \in D_a} r_{ab}^2\Sigma_p(c,d)+x_{ab}^2\Sigma_q(c,d)+2r_{ab}x_{ab}\Sigma_{pq}(c,d)$} \label{step2_2}
 \State Draw edge between nodes $a$ and $b^*$
 \State $D_{b^*} \gets D_{b^*} \bigcup D_a$
 \State $L \gets L - \{a\}$ \label{step2_3}
\EndIf
\EndFor
 \State $L \gets L \bigcup \{b^*\}$ \label{step2_4}
\EndWhile
\end{algorithmic}
\end{algorithm}

\textbf{Algorithm 1 Overview:} We reconstruct each tree within the distribution grid forest sequentially moving from the leaves to the root nodes. At every stage, $U$ represents the set of undiscovered nodes that are not part of the current reconstructed tree while $L$ represents the set of `current leaves' (nodes that are in the current reconstructed tree but with undiscovered parents). At each iteration, Step \ref{step2_1} selects the node $b^*$ from set $U$ with the largest second moment of voltage deviation. Next Step \ref{step2_2} adds edges between node $b^*$ and nodes in set $L$ of the growing tree using Lemma \ref{LemmadiffsqLC}. In the ideal case when infinitely many voltage magnitude samples are collected, second-order moments of the power injections satisfy the relation in Lemma \ref{LemmadiffsqLC}. However, we have a finite number of samples. Thus the presence of an edge is determined in Algorithm $1$ by checking if the relative difference between the reals on the left and right sides of the condition in Step \ref{step2_2} is less than a predefined tolerance, $\tau$:
\begin{align}
1 - \Biggl|\frac{\sum_{j=1}^m(\varepsilon^j_a-\varepsilon^j_{b^*})^2/m}{\sum_{c,d \in D_a} r_{ab}^2\Sigma_p(c,d)+x_{ab}^2\Sigma_q(c,d)+2r_{ab}x_{ab}\Sigma_{pq}(c,d)}\Biggr| < \tau \label{testcond}
\end{align}
Steps \ref{step2_3} and \ref{step2_4} update the set of current leaves $L$ before repeating the reconstruction steps with a new undiscovered node.\\

\textbf{Algorithm Complexity:} Ignoring complexity of computing the second moments in Steps \ref{step2_1} and \ref{step2_2} (part of the data pre-processing), there are $N+K$ steps ($N$ load nodes and $K$ substation nodes) in the `while' loop, and at most $N$ comparisons in the `for' loop for each node. Therefore, the worst-case complexity of this algorithm is $O(N^2+ NK)$.

Note that we use LC-PF model in Algorithm $1$. To design the DC-resistive version of Algorithm $1$, the condition in Step \ref{step2_2} should be replaced with the result in Lemma \ref{Lemmadiffsq}. (Required modifications are straightforward and thus their description is omitted.)

\section{Experiments}
\label{sec:experiments}
We perform a set of numerical experiments to test and demonstrate the performance of Algorithm $1$ in extracting the operational radial forest $\cal F$ from meshed ``as-designed'' distribution networks $\cal G$. We remind the reader that the observer in Algorithm $1$ has information of the full graph $\cal G$, the impedance (resistance and reactance) of all lines (operational or open) as well as the number of connected substation buses. Further, true second moments of active and reactive power injections at each non-substation node are assumed to be known. The set of measurements available as input with the observer comprises of deviations in voltage magnitudes ($\varepsilon$) at the grid nodes.

Table \ref{table_testcases} summarizes the distribution grid test systems by the number of load busses, number of substation busses, and the number of tie switches. Additional information on these test systems \cite{testcase1,testcase2,testcase3} can be found online at \cite{radialsource}. In normal operation, each test grid consists of nodes in a forest- or tree-like configuration $\cal{F}$ with open tie-switches. We construct the complete meshed network $\cal{G}$ by closing all the tie-switches. To test the scalability of our algorithms, we increase the number of possible forest configuration by introducing several additional non-operational lines into each system as noted in Table \ref{table_testcases}. Although these contribute to $\cal{G}$, they are kept open and do not contribute to power flows in the operation forest $\cal{F}$. The impedances (reactances and resistances) of the additional lines are generated by assigning random values uniformly between the minimum and maximum impedances of the operational lines. Fig.~\ref{fig:cases} displays the test networks $\cal{G}$ (solid and dashed lines) and the respective operational forests $\cal{F}$ (solid lines only).

\begin{table}[ht]
\caption{Summary of the tested distribution grids}
\begin{center}
\begin{tabular}{|p{1cm}|p{3cm}|p{2cm}|p{1cm}|}
\hline
Test Case & Number of buses / substations / tie-switches & Non-operational lines added & Source\\
\hline
$bus\_13\_3$ & $13/3/3$ & $10$ & \cite{testcase1} \\\hline
$bus\_29\_1$ & $29/1/1$ & $20$ & \cite{testcase2} \\\hline
$bus\_83\_11$ & $83/11/13$ & $30$ & \cite{testcase3} \\\hline
\end{tabular}
\end{center}
\label{table_testcases}
\end{table}

\begin{figure}[!bt]
\centering
\subfigure[]{\includegraphics[width=0.33\textwidth,height = .25\textwidth]{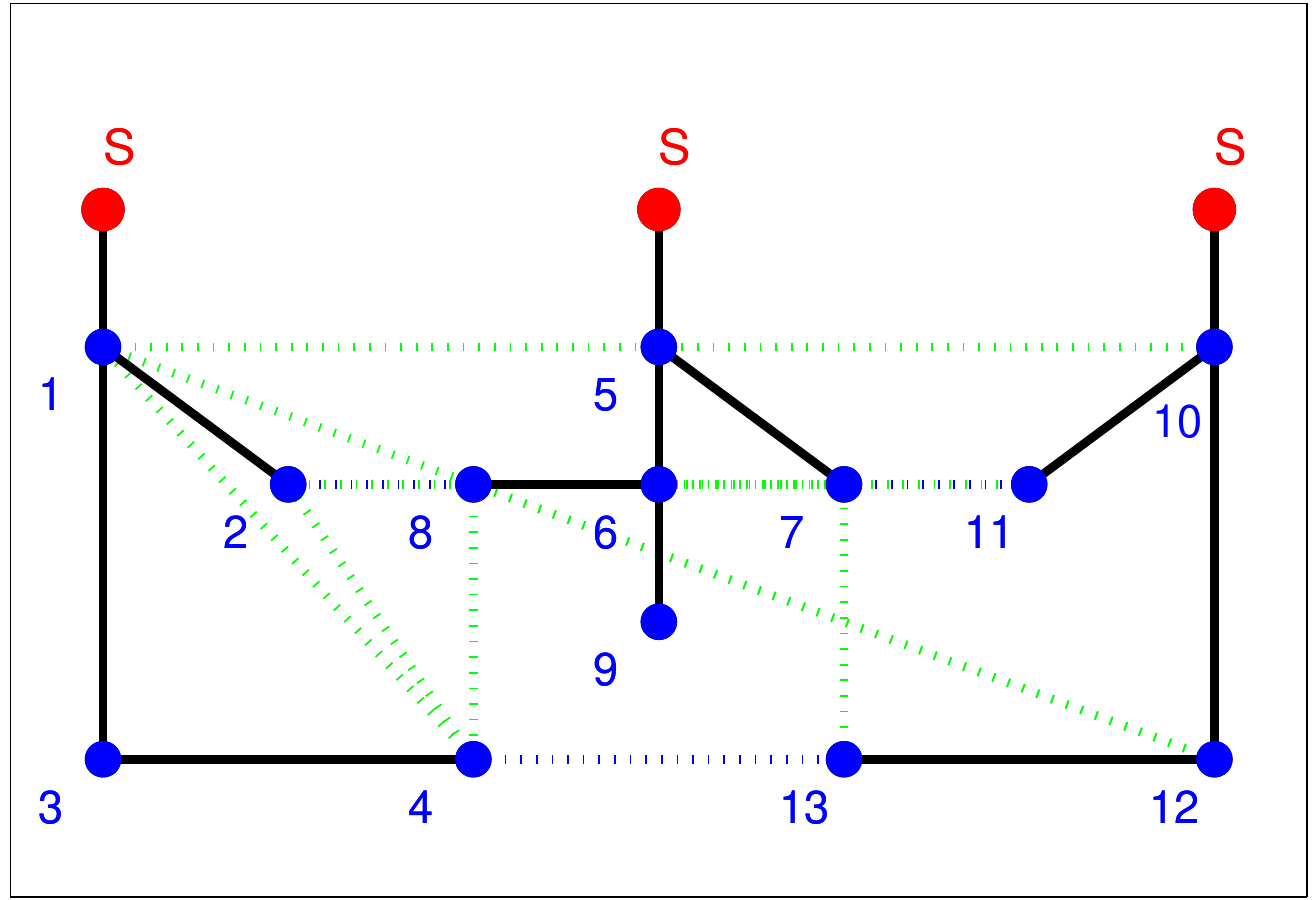}\label{fig:algo6case1}}
\subfigure[]{\includegraphics[width=0.33\textwidth,height = .25\textwidth]{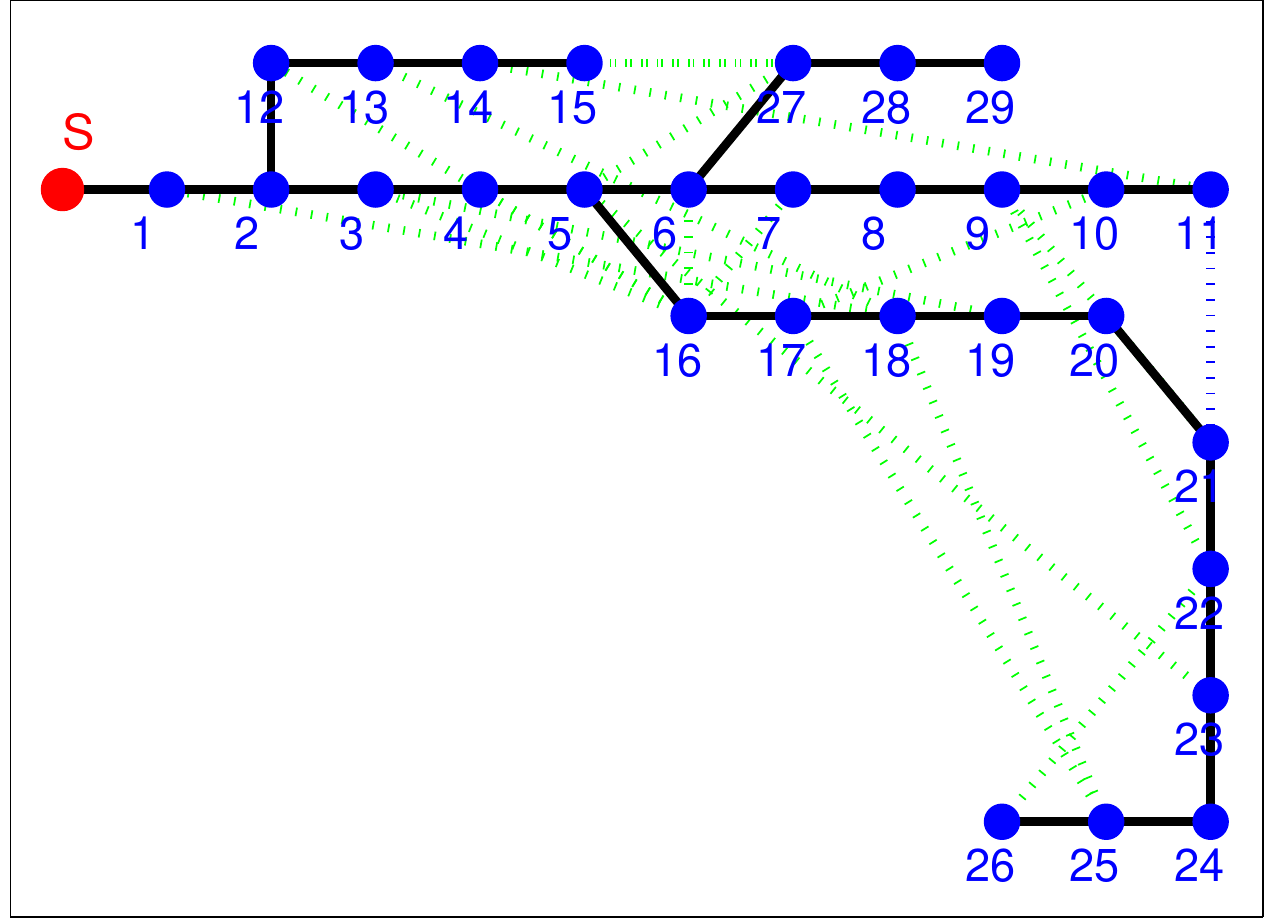}\label{fig:algo6case2}}
\subfigure[]{\includegraphics[width=0.33\textwidth,height = .25\textwidth]{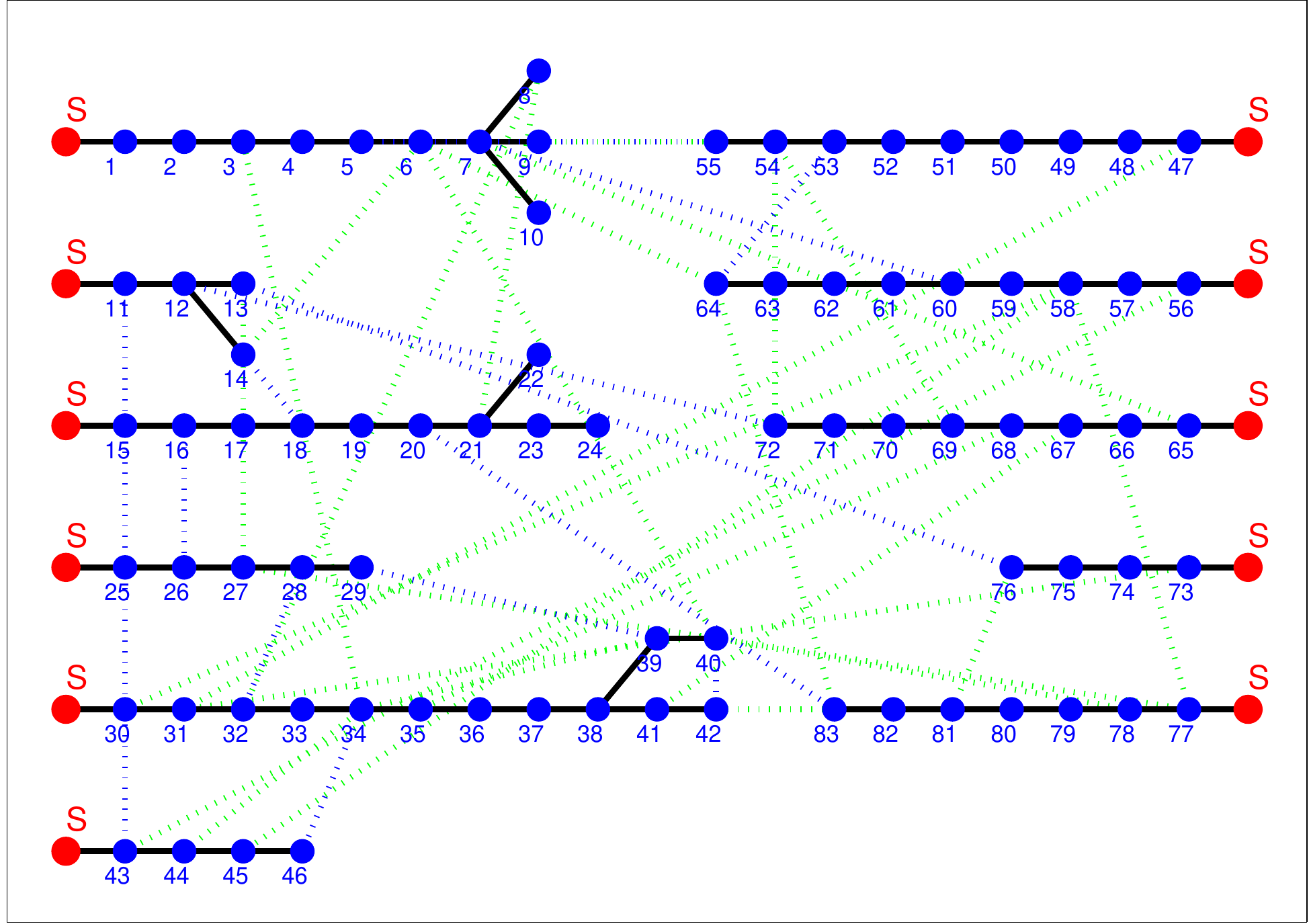}\label{fig:algo6case3}}
\vspace{-.25cm}
\caption{Layouts of distribution grids, also showing operational cases tested, in accordance with description in the text and summary in Table \ref{table_testcases}. The red circles represent substations (marked as $S$). The blue circles represent load nodes that are numbered. Black lines represent operational edges, while dotted blue lines represent open tie-switches. The additional lines are represented by dotted green lines. (a) $bus\_13\_3$ test case (b) $bus\_29\_1$ test case (c) $bus\_83\_11$ test case
\label{fig:cases}}
\vspace{-3mm}
\end{figure}

For each numerical experiment on a grid from Table \ref{table_testcases}, we pick an operational spanning forest $\cal{F}$ by opening tie switches. We also choose the statistics of the injections at each node using Gaussian distributions, unless otherwise specified. These distributions are used to generate multiple power injection samples, and from each vector-valued sample, we solve a PF to compute voltages and phases at every node in the network with the voltages at the substations fixed (i.e. they are slack busses). Averaging over all PF solutions, we compute empirical correlations of voltage magnitudes and phases. Using only these correlations, we run our algorithms and compare the resulting reconstruction with the actual operational configuration. In the reconstruction, we assume the observer has access to the resistance and reactance of all the lines in $\cal{G}$. All powers and voltages are presented in per unit (p.u.) values.

Figs. \ref{fig:algo2case1}-\ref{fig:algo2case3} display the accuracy of Algorithm $1$ for the different test grids from Table \ref{table_testcases}. The relative error is defined to be the number of mislabeled lines (connected when actually open and vice versa) divided by the size of the operational edge set. The relative error is averaged by computing many reconstructions using the same nodal power injection distributions. Different curves (colors) in Fig.~\ref{fig:algo2case1} show the effect of changing the tolerance $\tau$ in Eq.~\ref{testcond}.

The average fractional error in Figs. \ref{fig:algo2case1}-\ref{fig:algo2case3} decays exponentially with the number of samples used by the observer in the learning algorithm. For the tolerance values considered here, the majority of structural errors arise due to connected nodes not satisfying condition (\ref{testcond}) and hence being labelled as open. The decay is then intuitive as an increase in the sample size makes empirical moments in voltage magnitudes approximate their true values better which in turn leads to an increase in the number of operational lines satisfying (\ref{testcond}) and being correctly identified. On the other hand, if a sufficiently large value of $\tau$  is used, condition (\ref{testcond}) will be relaxed and possibly be satisfied even by unconnected nodes. In such a case, a majority of errors will be recorded due to open lines being incorrectly labelled as operational. As errors of this type (open edges classified as operational) does not improve with the number of measurement samples, the average fractional errors will not decay with the sample size. This is elucidated in Fig. \ref{fig:algo2_less}, where the $bus\_13\_3$ structure is learnt, in the presence of $50$ non-operational lines. Note that for larger values of $\tau$ in Fig. \ref{fig:algo2_less}, the errors do not decay with the sample size whereas for smaller values, they do as justified in the preceding discussion. For larger sample sizes, a smaller value of $\tau$ is indeed preferable as observed in Fig. \ref{fig:algo2_less}.

\begin{figure}[!bt]
\centering
\subfigure[]{\includegraphics[width=0.42\textwidth,height = .35\textwidth]{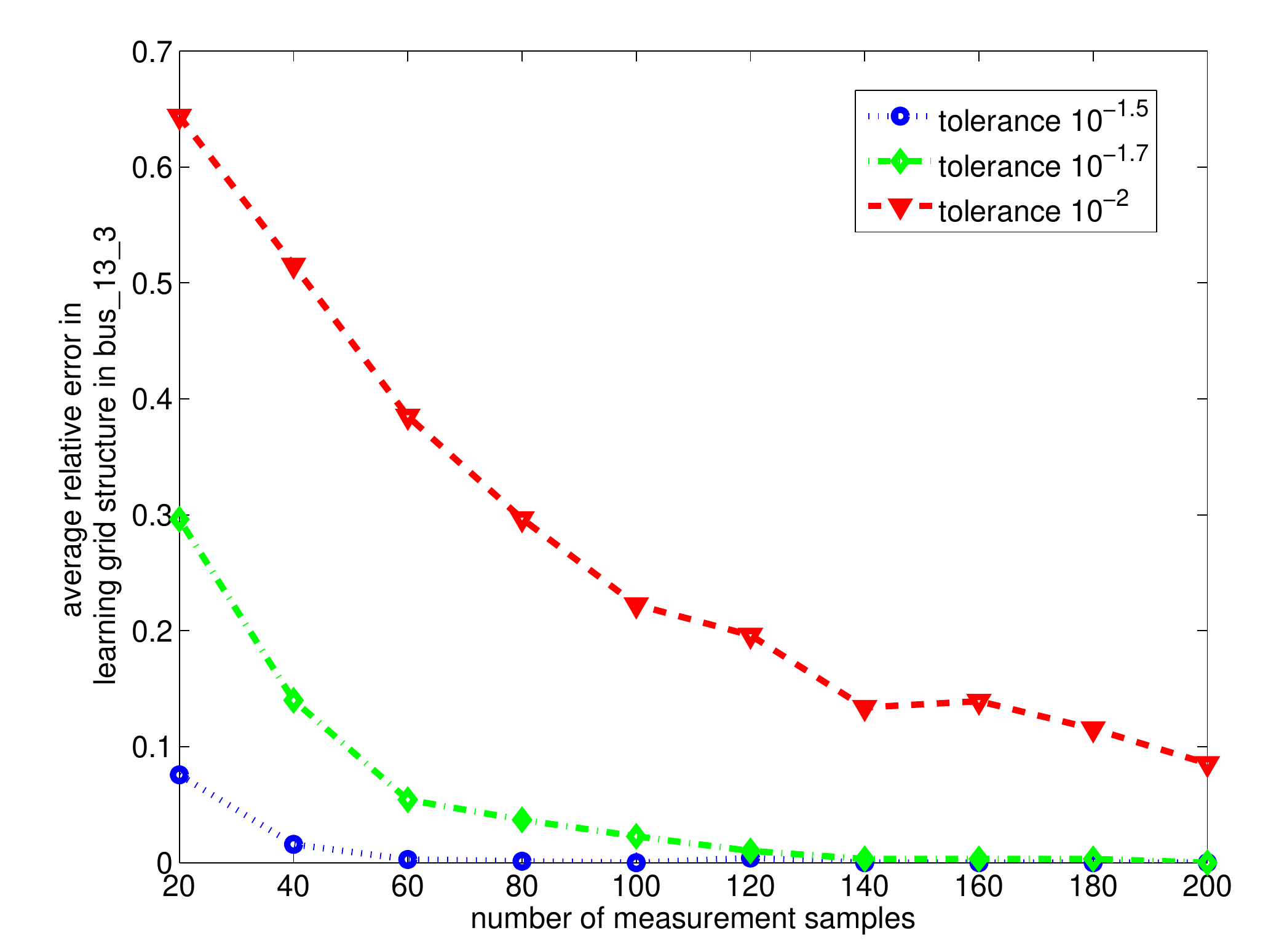}\label{fig:algo2case1}}
\subfigure[]{\includegraphics[width=0.42\textwidth,height=0.35\textwidth]{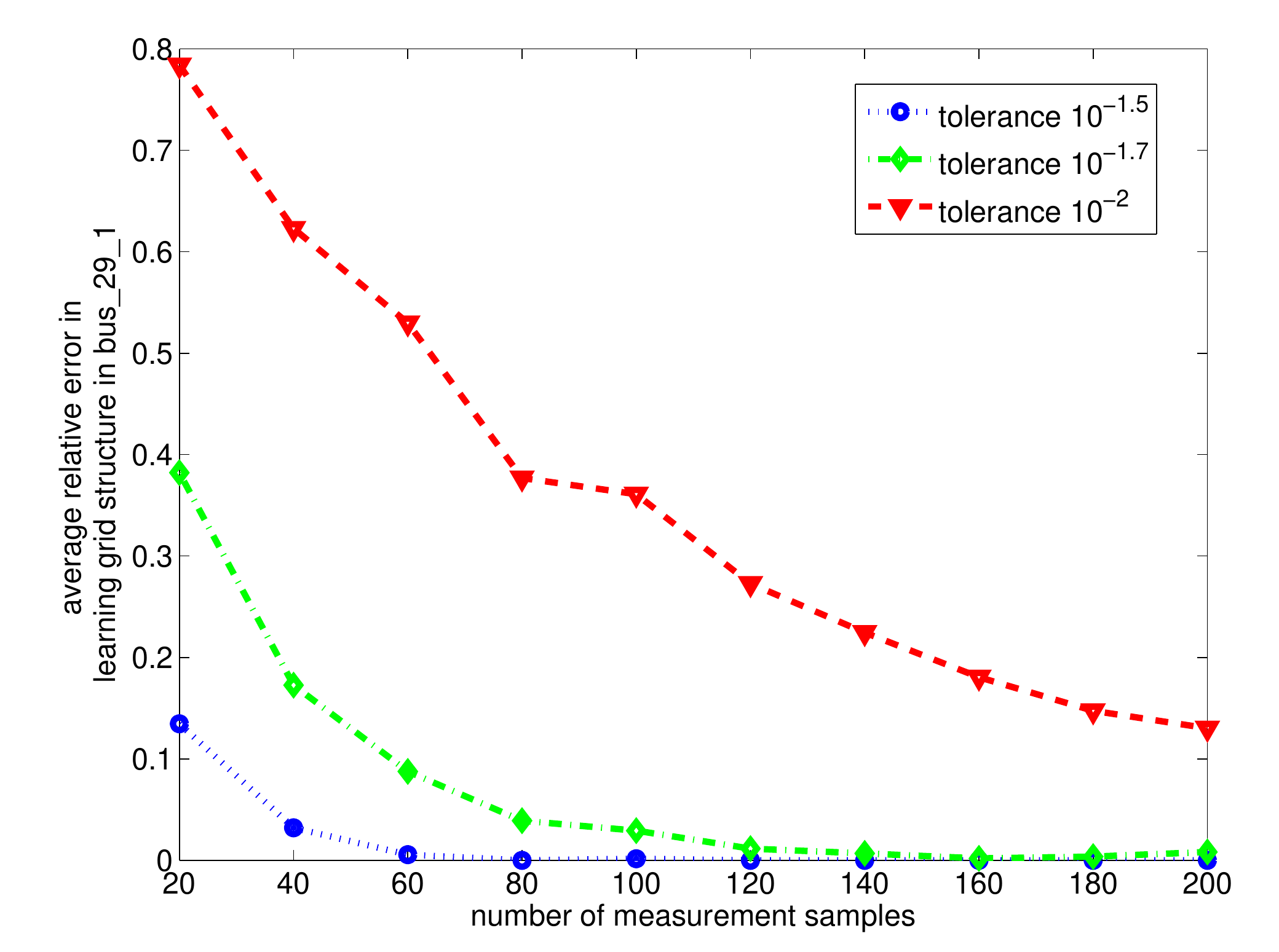}\label{fig:algo2case2}}
\subfigure[]{\includegraphics[width=0.42\textwidth,height = .35\textwidth]{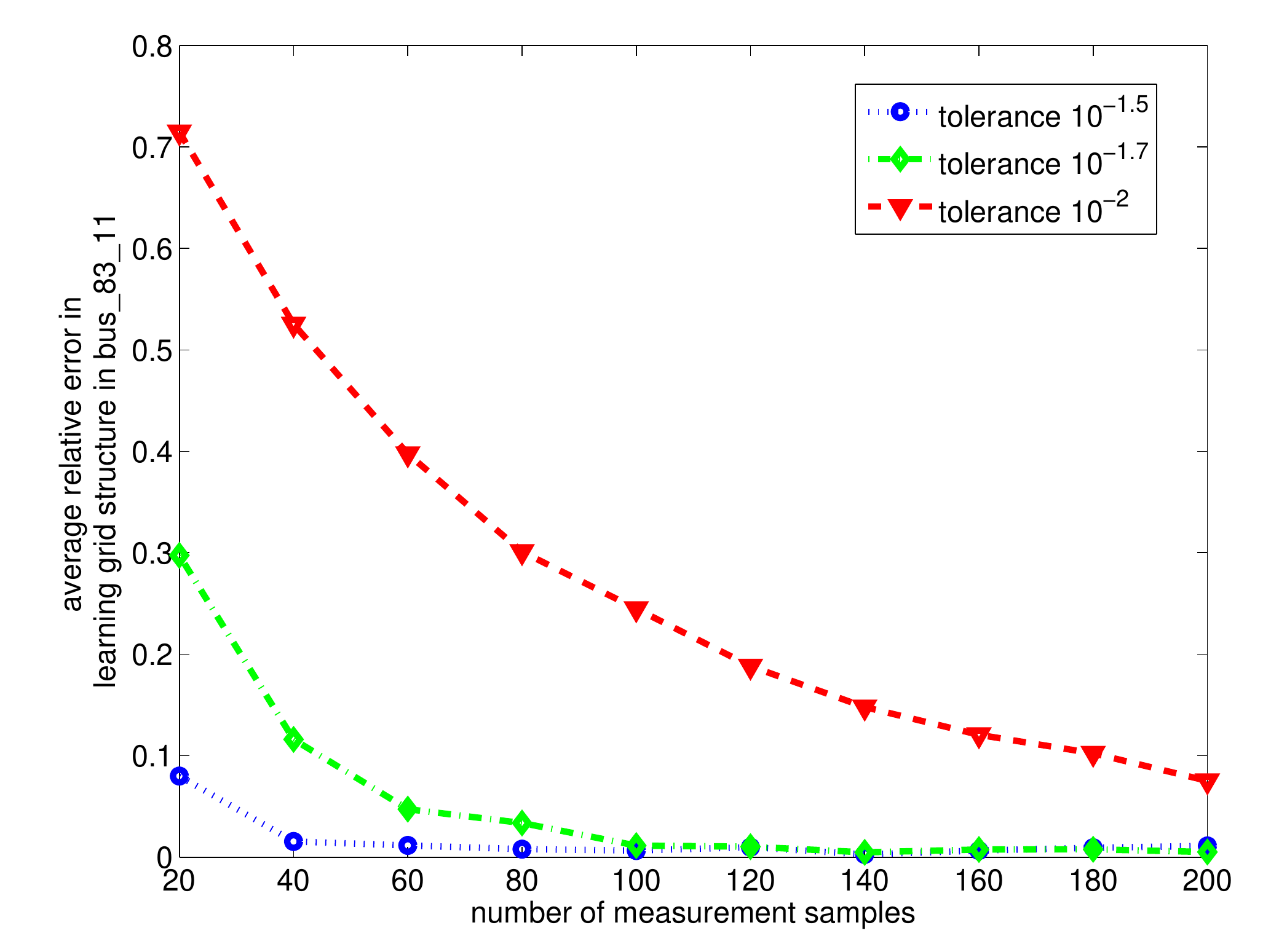}\label{fig:algo2case3}}
\vspace{-.25cm}
\caption{Average fractional errors plotted against the number of measurement samples for the LC-PF model with different tolerance thresholds - correspondent to the Algorithm 2.(a), (b) and (c) show results for the $bus\_13\_3$, $bus\_29\_1$ and $bus\_83\_11$ models respectively.
\label{fig:algo2}}
\vspace{-3mm}
\end{figure}
\squeezeup

\begin{figure}[!bt]
\centering
\includegraphics[width=0.42\textwidth,height = .35\textwidth]{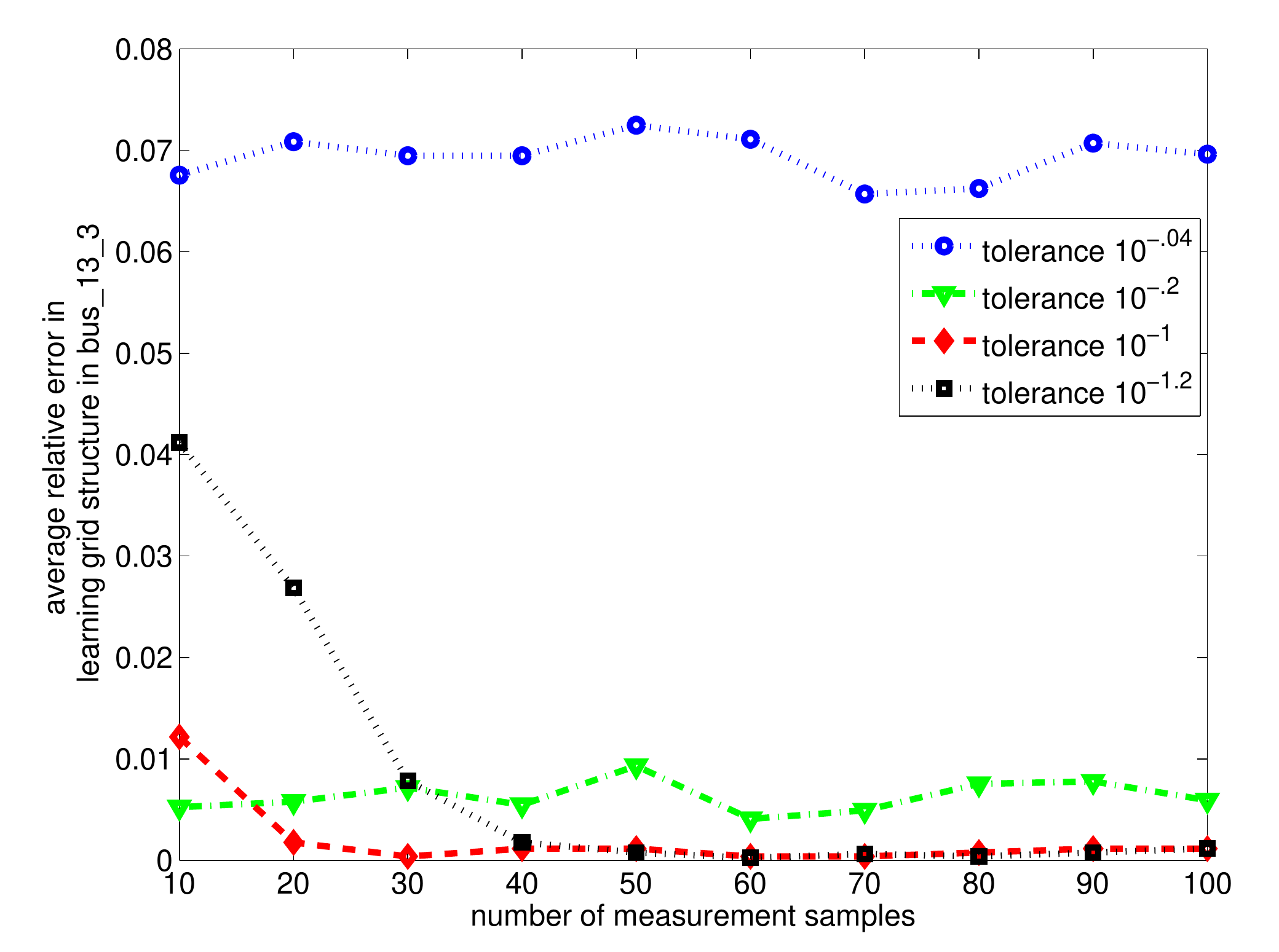}
\caption{Effect of threshold on the average fractional errors in Algorithm $1$ using the LC-PF model for the $bus\_13\_3$ (with $50$ additional non-operational edges).
\label{fig:algo2_less}}
\vspace{-3mm}
\end{figure}

\section{Conclusions \& Path Forward}
\label{sec:conclusion}
Accurate structural estimation of distribution grids is important to many applications including failure identification, power flow optimization and estimation of state variables. In this manuscript, we have developed algorithms to learn the structure of a radial distribution grid using observed nodal voltage magnitude measurements. We have used a Linear Coupled (LC) approximation that relates complex nodal voltages to the complex power consumptions to prove that second moments of nodal voltage magnitudes in radial distribution grids follow certain statistical structure/ordering. Our algorithm relies on these results to reconstruct the operational tree in a bottom up fashion -- starting from the leaves and progressing to the root of the grid.

The primary benefits of our approach are two-fold. First, our model is practical as voltage measurements are easily available at distribution grid nodes and individual devices. Second, the only assumption used regarding the statistics of the loads is the positivity of non-central correlation of nodal load profiles, which is natural for most distribution grids and more general than other assumptions discussed in the literature. We tested our algorithms on sample distribution grids and observed an exponential decay of average errors with increasing number of measurement samples. In the next part, we extend the work in this paper along the following directions: coupling structure learning with estimation of load statistics in the grid or estimation of line parameters in the grid, and learning grid structure even when measurement data is missing. Developing general algorithms for learning distribution grid operational forests that are not restricted to linearized power flow models remains a potential future direction of research.

\appendix
\section{Models of Power Flows}
\label{app:PF}
\subsection{Basic Power Flows}

Note that the operational `base-constrained spanning forest' $\cal F$ can be thought of as a spanning tree over an extended graph, where an (artificial) super node is introduced and connected with (artificial) lines to all the sub-station nodes. This trick allows, without a loss of generality, to limit our discussion in the following to spanning trees, thus replacing ${\cal F}$ by ${\cal T}$.

Let $z=(z_{ab}=r_{ab}+i x_{ab}|(ab)\in {\cal E})$ as the vector of complex line impedances in the grid. ($i^2=-1$.) Expressed in terms of the complex powers and potentials (voltages and phases) the Kirchoff laws over the operational (spanning tree) configuration ${\cal T}$ become
\begin{align}
\forall a\in{\cal V}: & P_a =p_a+i q_a=\underset{b:(ab)\in{\cal E}^{{\cal T}}}{\sum}\frac{v_a^2-v_a v_b\exp(i\theta_a-i\theta_b)}{z_{ab}^*}\label{P-complex}
\end{align}
where the real valued scalars, $v_a$ and $\theta_a$, characterize voltage magnitude and phase respectively at node $a$. 
We assume that the power within the system considered is balanced through a slack bus, $a=0$.
The set of Eqs.~(\ref{P-complex}), expressing potentials via complex powers injected at the nodes of the power graph, are called \emph{Power Flow} (PF) equations.

\subsection{Linear Coupled (LC) Approximation of Power Flows}
We linearize the PF Eqs.~(\ref{P-complex}) in the first order jointly over phase difference and voltage deviations ($v_a -1=\varepsilon_a$) from nominal, i.e. the two corrections are considered on equal footing. We arrive at the following set of equations:
\begin{align}
p_a&=\underset{b:(ab)\in{\cal E}^{{\cal T}}}{\sum}\left(\beta_{ab}(\theta_a-\theta_b)+
g_{ab}(\varepsilon_a-\varepsilon_b)\right),
\label{PF_LPV_p}\\
q_a&=\underset{b:(ab)\in{\cal E}^{{\cal T}}}{\sum}
\left(-g_{ab}(\theta_a-\theta_b)+
\beta_{ab}(\varepsilon_a-\varepsilon_b)\right)\label{PF_LPV_q}\\
\text{where~} &\forall a\in{\cal V},\forall (ab)\in{\cal E}^{{\cal T}}:~|\varepsilon_a|\ll 1, |\theta_a-\theta_b| \ll 1,\label{varepsilon_small}\\
&~~~~~~~~~~~~~~ g_{ab}\doteq\frac{r_{ab}}{x_{ab}^2+r_{ab}^2}, \beta_{ab}\doteq\frac{x_{ab}}{x_{ab}^2+r_{ab}^2} \label{g}
\end{align}

Eqs.~(\ref{PF_LPV_p},\ref{PF_LPV_q}) show coupling between phases and voltages, thus calling it the Linear-Coupled (LC) approximation, is proper. The linearity of LC-PF is used in the paper in deriving results to learn the grid structure. 
Two comments are in order. First of all, notice that the LC-PF approximation does not make any assumption about the relative strength of inductance and resistance, thus making it applicable to power distribution systems where the two line characteristics are typically of the same order. Second, expressions under the sum on the rhs of Eqs.~(\ref{PF_LPV_p},\ref{PF_LPV_q}) represent active and reactive power flows which are antisymmetric ($p_{a\to b}=-p_{b\to a}$, $q_{a\to b}=-q_{b\to a}$). This emphasizes an important consequence of linear approximation in the LC-PF model -- both active and reactive losses in lines are ignored as such losses occur at second order.

\subsection{DC-resistive approximation} 
In low-voltage distribution grids line inductances may be much smaller in magnitude than line resistances. Then the LC-PF model can be simplified even further. Indeed, taking this case to the extreme where inductance can be ignored in comparison with the resistance, $\forall \{a,b\}:\quad \beta_{ab}\ll g_{ab}$, we arrive at the following resistance-dominating version of Eqs.~(\ref{PF_LPV_p},\ref{PF_LPV_q})
\begin{align}
\forall a\in {\cal V}: p_a\approx\sum_{b:(ab)\in{\cal E}^{{\cal T}}}
g_{ab}(\varepsilon_a-\varepsilon_b),
q_a\approx\sum_{b:(ab)\in{\cal E}^{{\cal T}}}g_{ab}(\theta_b-\theta_a)\nonumber
\end{align}
We will coin this approximation DC-resistive PF. Its formulation is similar to the traditional DC flow model \cite{abur2004power} where active power flows are related to phase angles. The traditional DC model, used primarily for transmission networks, requires line inductances to dominate resistances, which is seldom observed in distribution grids.

\subsection{From Power Flows to DistFlow and LinDistFlow}

The DistFlow Eqs. introduced by Baran and Wu in \cite{89BWa,89BWb} are derived from the PF Eq.~(\ref{P-complex})
\begin{align}
& p_{a\to b}-r_{ab}\frac{p_{a\to b}^2+q_{a\to b}^2}{v_a^2}=p_b+\sum_{(bc)\in{\cal E}^{{\cal T}};c\neq a} p_{b\to c},
\label{DF_p}\\
& q_{a\to b}-x_{ab}\frac{p_{a\to b}^2+q_{a\to b}^2}{v_a^2}=q_b+\sum_{(bc)\in{\cal E}^{{\cal T}};c\neq a} q_{b\to c},
\label{DF_q}\\
& v_b^2=v_a^2-2\left(r_{ab}p_{a\to b}+x_{ab} q_{a\to b}\right)+\left(r_{ab}^2+x_{ab}^2\right)\frac{p_{a\to b}^2+q_{a\to b}^2}{v_a^2}
\label{DF_v}
\end{align}
where each of the three equations above are stated in terms of active, $p_{a\to b}$, and reactive, $q_{a\to b}$, powers over directed lines, $a\to b$, and voltages over nodes, $a\in{\cal V}$. If power losses at any line segment is negligible, Eqs.~(\ref{DF_p},\ref{DF_q},\ref{DF_v}) reduce to \cite{baran1989network}.
\begin{align}
&p_{a\to b}\approx p_b+\sum_{\substack{(bc)\in{\cal E}^{{\cal T}}\\c \neq a}} p_{b\to c},
q_{a\to b}\approx q_b+\sum_{\substack{(bc)\in{\cal E}^{{\cal T}}\\c \neq a}} q_{b\to c},
\label{lin_DF_q}\\
&\varphi_b\approx \varphi_a-2\left(r_{ab}p_{a\to b}+x_{ab} q_{a\to b}\right), \quad \varphi_a\equiv v_a^2
\label{lin_DF_v}
\end{align}
On a general graph with loops, the number of these (directed) edge-related variables in Eq.~(\ref{P-complex}) is larger then the number of phases and voltages. However, the former becomes equal to the later if the grid is a tree, where the number of edges is equal to the number of nodes minus one. Therefore when the phase and voltage at one special node (each substation node per tree in our case) is fixed at $v_0$ (say), phases and voltages at all other nodes can be reconstructed from the directional line flows, and vice versa.

Note that additional boundary conditions arise in a tree due to the requirement of active and reactive power flowing from/into any leaf being equal to its nodal injection. Thus,
\begin{align}
& \forall a\in{\cal V}_0:\quad v_a = v_0,\label{v0}\\
& \forall b\in V_l^{{\cal T}},\ \& \ (a\to b)\in{\cal E}^{{\cal T}}: p_{a\to b}=p_b,\quad q_{a\to b}=q_b,
\label{leafs}
\end{align}
where ${\cal V}_0\subset {\cal V}$ is the set of $K$ substations nodes (colored in red in Fig.~\ref{fig:city}), and $V_l^{{\cal T}}$ is the set of leaf-nodes in tree ${\cal T}$. Based on the assumption that voltage drop across any line segment is sufficiently small, we linearize Eq. (\ref{lin_DF_v}) by substituting $v_a=1+\varepsilon_a$, with $|\varepsilon_a|\ll 1$ and have
\begin{equation}
\forall (ab)\in{\cal E}^{{\cal T}}:\quad \varepsilon_a-\varepsilon_b\approx \left(r_{ab}p_{a\to b}+x_{ab} q_{a\to b}\right).
\label{lin_DF_epsilon}
\end{equation}
Notice that the LinDistFlow Eqs.~(\ref{lin_DF_q},\ref{lin_DF_epsilon}), derived assuming that the grid graph is a tree, are exactly equivalent to Eqs.~(\ref{PF_LPV_p},\ref{PF_LPV_q}) of the Linear Coupled (LC) Approximation.

\section*{Acknowledgment}

The work at LANL was funded by the Advanced Grid Modeling Program in the Office of Electricity in the US Department of Energy and was carried out under the auspices of the National Nuclear Security Administration of the U.S. Department of Energy at Los Alamos National Laboratory under Contract No. DE-AC52-06NA25396.

\bibliographystyle{IEEETran}
\bibliography{../../Bib/FIDVR,../../Bib/SmartGrid,../../Bib/voltage,../../Bib/trees}

\begin{IEEEbiography}[{\includegraphics[width=1in,height=2in,clip,keepaspectratio]{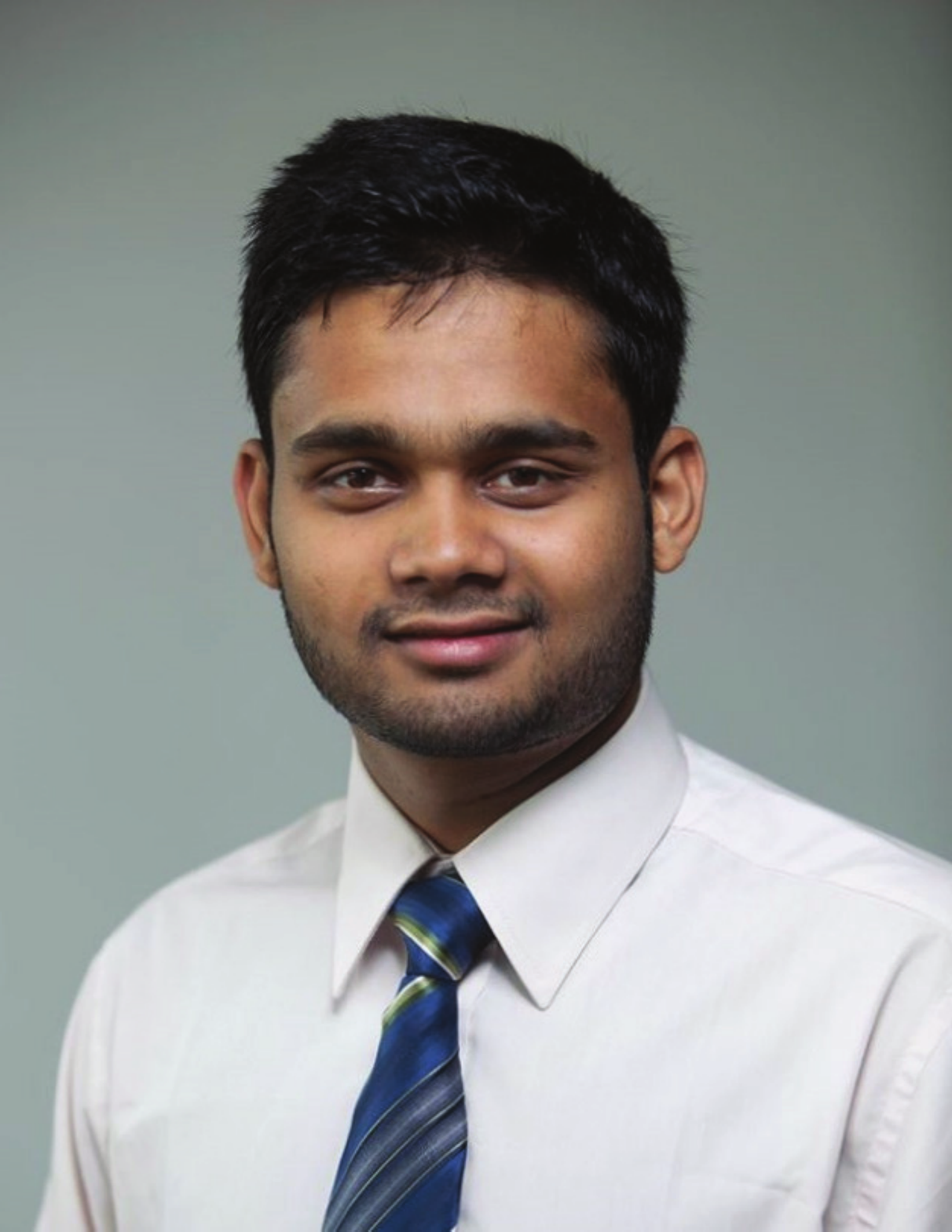}}]{Deepjyoti Deka}
Deepjyoti Deka received his M.S. in Electrical Engineering from University of Texas, Austin in 2011, and his B.Tech in Electronics and Communication Engineering from IIT Guwahati, India, in 2009 for which he was awarded the Institute Silver Medal. He is currently a PhD candidate in Electrical Engineering at UT Austin. His research focusses on the design and analysis of power grid structure, operations and data security. He is also interested in modeling and optimization in social and physical networks. He has held internship positions at Los Alamos National Lab, Los Alamos NM, Electric Reliability Council of Texas, Taylor TX, and Qualcomm Inc, San Diego CA.
\end{IEEEbiography}

\vspace{-1.5cm}
\begin{IEEEbiography}[{\includegraphics[width=1in,height=2in,clip,keepaspectratio]{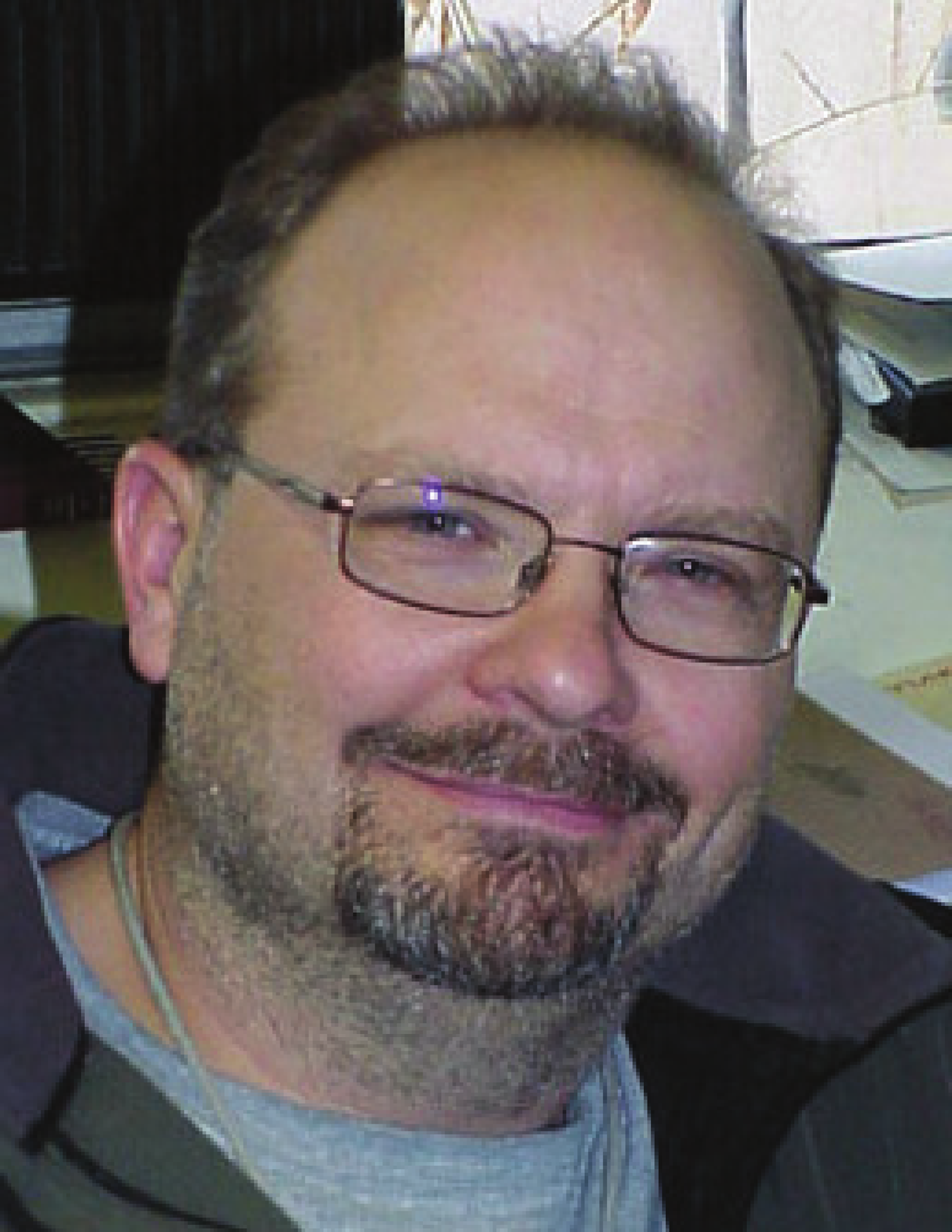}}]{ScottBackhaus}
Scott Backhaus received the Ph.D. degree in physics from the University
of California at Berkeley in 1997 in the area of experimental macroscopic
quantum behavior of superfluid He-3 and He-4.
In 1998, he came to Los Alamos, NM, was Director's Funded
Postdoctoral Researcher from 1998 to 2000, a Reines Postdoctoral
Fellow from 2001 to 2003, and a Technical Staff Member from 2003 to
the present. While at Los Alamos, he has performed both experimental
and theoretical research in the area of thermoacoustic energy conversion
for which he received an R\&D 100 award in 1999 and Technology
Review's Top 100 Innovators Under 35 [award in 2003]. Recently, his
attention has shifted to other energy-related topics including the fundamental
science of geologic carbon sequestration and grid-integration of
renewable generation.
\end{IEEEbiography}

\vspace{-1.5cm}
\begin{IEEEbiography}[{\includegraphics[width=1in,height=1.25in,clip,keepaspectratio]{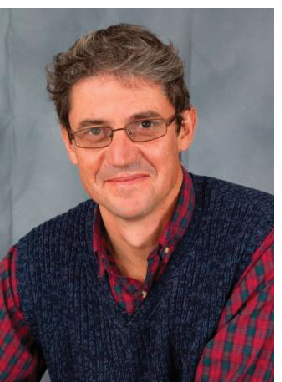}}]{Michael Chertkov}
Dr. Chertkov's areas of interest include statistical and
mathematical physics applied to energy and communication networks,
machine learning, control theory, information theory, computer science,
fluid mechanics and optics. Dr. Chertkov received his Ph.D. in
physics from the Weizmann Institute of Science in 1996, and his
M.Sc. in physics from Novosibirsk State University in 1990.
After his Ph.D., Dr. Chertkov spent three years at Princeton
University as a R.H. Dicke Fellow in the Department of Physics.
He joined Los Alamos National Lab in 1999, initially as a J.R.
Oppenheimer Fellow in the Theoretical Division. He is now a
technical staff member in the same division. Dr. Chertkov has
published more than 130 papers in these research
areas. He is an editor of the Journal
of Statistical Mechanics (JSTAT), associate editor of IEEE Transactions on
Control of Network Systems, a fellow of the American Physical
Society (APS), and a Founding Faculty Fellow of Skoltech (Moscow, Russia).
\end{IEEEbiography}

\end{document}